\def\cc{{\mathcal C}}
\def\ff{{\mathcal F}}
\def\ss{{\mathcal S}}
\def\ffi{\varphi}
\def\eps{\varepsilon}
\def\dst{\displaystyle}
\DeclareMathOperator{\supp}{supp}
\DeclareMathOperator{\diam}{diam}
\def\H{{\mathcal{H}}}
\def\N{{\mathbb{N}}}
\def\R{{\mathbb{R}}}
\def\S{{\mathbb{S}}}
\def\Z{{\mathbb{Z}}}
\newcommand{\norm}[1]{{\left\|{#1}\right\|}}
\newcommand{\ent}[1]{{\left[{#1}\right]}}
\newcommand{\abs}[1]{{\left|{#1}\right|}}
\newcommand{\scal}[1]{{\left\langle{#1}\right\rangle}}
\newenvironment{remark}[1][]{\vskip3pt\noindent\textbf{Remark.}\ }{\rm\vskip3pt}
\newenvironment{fact}[1][{\sl}]{\vskip3pt\noindent\textbf{Theorem}\ }{\sl\vskip3pt}
\newtheorem{lemma}{Lemma}[section]
\newtheorem{proposition}[lemma]{Proposition}
\newtheorem{theorem}[lemma]{Theorem}
\newtheorem{corollary}[lemma]{Corollary}
\newtheorem{examplenum}[lemma]{Example}
\date{\today}
\begin{document}

\title[Uncertainty Principle in Besov spaces]{An uncertainty principle and sampling inequalities in Besov spaces}

\author[Ph. Jaming]{Philippe Jaming}
\address{Institut de Math\'ematiques de Bordeaux UMR 5251,
Universit\'e de Bordeaux, cours de la Lib\'eration, F 33405 Talence cedex, France}
\email{philippe.jaming@math.u-bordeaux1.fr}

\author[E. Malinnikova]{Eugenia Malinnikova}
\address{Department of Mathematical Sciences,
Norwegian University of Science and Technology (NTNU)
7491 Trondheim, Norway}
\email{eugenia@math.ntnu.no}

\keywords{Uncertainty Principle; Sampling Theory; Besov spaces}

\thanks{
Ph.J. kindly acknowledge financial support from the French ANR programs ANR
2011 BS01 007 01 (GeMeCod), ANR-12-BS01-0001 (Aventures).
This study has been carried out with financial support from the French State, managed
by the French National Research Agency (ANR) in the frame of the ”Investments for
the future” Programme IdEx Bordeaux - CPU (ANR-10-IDEX-03-02).\\
E.M. was partly supported by Project 213638 of the Research Council of Norway.\\
This research was sponsored by the French-Norwegian PHC AURORA 2014 PROJECT N° 31887TC, N~233838, {\it CHARGE}}

\begin{abstract}
We extend Strichartz's uncertainty principle \cite{str}
 from the setting of the Sobolev space
$W^{1,2}(\R)$ to more general Besov spaces $B^{1/p}_{p,1}(\R)$. The main result gives an estimate from below of the trace of a function from the Besov space on a uniformly distributed discrete subset. We also prove the corresponding result in the multivariate case and  discuss some applications to irregular approximate sampling in critical Besov spaces.
\end{abstract}

\maketitle

\section{Introduction}

\subsection{Motivation and main results}
Let us recall the classical Heisenberg uncertainty principle
\[
\int_{\R}|\zeta|^2|\hat{f}(\zeta)|^2\,\mbox{d}\zeta\int_{\R}|x|^2|f(x)|^2\,\mbox{d}x\ge \frac{\|f\|_{L^2}^4}{16\pi^2},\]
 where $f\in L^2(\R)$ and $\dst\hat{f}(\zeta)=\int_{\R}f(x)\exp(-2\pi i x\zeta)\,\mbox{d}x$.
It is frequently refereed to as a restriction for a simultaneous good localization of a function and its Fourier transform around the origin.  
It can also be read as an inequality preventing smooth functions with bounded norm (in some homogeneous Sobolev space) to be well concentrated, since
\[
\int_{\R}|\zeta|^2|\hat{f}(\zeta)|^2d\zeta=\|f'\|_{L^2}^2=\|f\|_{\dot{W}^{1,2}}^2.
\]
Here we use standard notation, $W^{s,p}(\R)=\{f\in L^p: (I-\Delta)^{s/2}f\in L^p\}$ for $s\ge 0$ and $p\ge 1$ and define the (homogeneous) norm in $W^{s,p}(\R)$ by $\|f\|_{\dot{W}^{s,p}}=\|\Delta^{s/2}f\|_{L^p}$ for $f\in W^{s,p}(\R)$.

The Heisenberg inequality  was further generalized by Cowling and Price:
\begin{fact}{\bf{(Cowling-Price, \cite{CP}).}}
{\sl Let $p,q\in[1,\infty]$ and $a,b>0$. There exists a constant $K$ such that
\begin{equation}
\label{eq:cp1}
\bigl\||x|^af\bigr\|_{L^p}+\bigl\||\xi|^b\widehat{f}\bigr\|_{L^q}\geq K\norm{f}_{L^2}
\end{equation}
for all $f\in L^2(\R)$ if and only if
$$
a>\frac{1}{2}-\frac{1}{p}\quad\mathrm{and}\quad b>\frac{1}{2}-\frac{1}{q}.
$$
If this is the case, let $\gamma$ be given by
$$
\gamma\left(a-\frac{1}{2}+\frac{1}{p}\right)=(1-\gamma)\left(b-\frac{1}{2}+\frac{1}{q}\right)
$$
then there exists a constant $K$ such that
$$
\bigl\||x|^af\bigr\|_{L^p}^\gamma\bigl\||\xi|^b\widehat{f}\bigr\|_{L^q}^{1-\gamma}\geq K\norm{f}_{L^2}.
$$}
\end{fact}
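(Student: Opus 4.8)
The plan is to prove the sharp multiplicative inequality
$$\norm{|x|^af}_{L^p}^{\gamma}\,\norm{|\xi|^b\widehat f}_{L^q}^{1-\gamma}\ge K\norm{f}_{L^2},$$
with $\gamma=c_2/(c_1+c_2)$, where $c_1=a-\frac12+\frac1p$ and $c_2=b-\frac12+\frac1q$, and then to read off the additive form from it, since the two formulations are equivalent. Writing $A=\norm{|x|^af}_{L^p}$ and $B=\norm{|\xi|^b\widehat f}_{L^q}$, the dilation $f_\lambda(x)=f(\lambda x)$ gives $A_\lambda=\lambda^{-(c_1+1/2)}A$, $B_\lambda=\lambda^{c_2-1/2}B$ and $\norm{f_\lambda}_{L^2}=\lambda^{-1/2}\norm{f}_{L^2}$; the precise choice of $\gamma$ is exactly the one making $A^\gamma B^{1-\gamma}/\norm{f}_{L^2}$ dilation invariant, which is the relation defining $\gamma$ in the statement. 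Consequently, applying the additive inequality to $f_\lambda$ and minimising the left-hand side over $\lambda>0$ yields the multiplicative one, while the weighted arithmetic--geometric mean inequality $A^\gamma B^{1-\gamma}\le\gamma A+(1-\gamma)B\le A+B$ (valid since $\gamma\in(0,1)$) shows conversely that the multiplicative form implies the additive one with the same constant.

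For the sufficiency I would use a time--frequency concentration argument. Let $Q_r$ denote multiplication by the indicator of $\set{|x|\le r}$ and $P_R$ the frequency cut-off with $\widehat{P_Rf}=\widehat f\,\mathbf 1_{\{|\xi|\le R\}}$, and set $Q_r^c=I-Q_r$, $P_R^c=I-P_R$. Splitting $f=Q_r^cf+Q_rP_R^cf+Q_rP_Rf$ gives
$$\norm{f}_{L^2}\le\norm{Q_r^cf}_{L^2}+\norm{P_R^cf}_{L^2}+\norm{Q_rP_Rf}_{L^2}.$$
Assuming first $p,q\ge2$, Hölder's inequality bounds the spatial tail by $\norm{Q_r^cf}_{L^2}\le C\,r^{-c_1}A$: writing $|f|^2=|x|^{-2a}(|x|^a|f|)^2$ and applying Hölder with exponent $p/2$, the residual weight $|x|^{-2ap/(p-2)}$ is integrable on $\set{|x|>r}$ precisely because $a>\frac12-\frac1p$, which is where the first hypothesis enters; symmetrically $\norm{P_R^cf}_{L^2}\le C\,R^{-c_2}B$ uses $b>\frac12-\frac1q$. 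The mixed term is controlled through the Hilbert--Schmidt norm of $Q_rP_R$, whose kernel is computed directly to give $\norm{Q_rP_R}_{2\to2}\le C\sqrt{rR}$. Fixing $rR$ equal to a small constant absorbs $\norm{Q_rP_Rf}_{L^2}$ into the left-hand side, and optimising $r,R$ under the constraint $rR=\mathrm{const}$ then produces exactly the exponents $A^\gamma B^{1-\gamma}$ with $\gamma=c_2/(c_1+c_2)$. When $p<2$ or $q<2$ the corresponding condition is automatically satisfied, the weighted integrals converge with no constraint, and the two tail estimates can instead be obtained by transferring to the dual side via the Hausdorff--Young inequality; these cases are less delicate, and the same decomposition continues to drive the argument.

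For the necessity I would test on dilates $f_\lambda$ of a fixed Schwartz function: after multiplying by $\lambda^{1/2}$ the additive inequality reads $\lambda^{-c_1}A+\lambda^{c_2}B\ge K\norm{f}_{L^2}$ for all $\lambda>0$, and letting $\lambda\to0$ or $\lambda\to\infty$ shows that it cannot hold whenever exactly one of $c_1,c_2$ is negative while the other is positive, i.e.\ whenever exactly one of the two conditions fails strictly. The main obstacle is the rest of the converse: the two critical cases $a=\frac12-\frac1p$ and $b=\frac12-\frac1q$, together with the case where both conditions fail simultaneously, are invisible to dilating a single function, because dilation merely trades the spatial term against the frequency term. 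These require genuinely phase-space-spread test families---superpositions of well-separated modulated bumps, or slowly concentrating sequences carrying a logarithmic correction at the endpoint---for which both $A/\norm{f}_{L^2}$ and $B/\norm{f}_{L^2}$ tend to $0$ at once; constructing such families and verifying the borderline integrals is the technically delicate step.
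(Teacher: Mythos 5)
First, a point of order: the paper does not prove this statement at all --- it is quoted verbatim from Cowling--Price \cite{CP} as background motivation --- so there is no internal proof to compare against, and your argument has to stand on its own. Much of it does: the reduction between the additive and multiplicative forms (dilation invariance fixing $\gamma=c_2/(c_1+c_2)$, plus the weighted AM--GM inequality for the converse) is correct, and your sufficiency argument is sound in the range $p,q\ge 2$ (including $\infty$): the H\"older tail bounds $\norm{Q_r^cf}_{L^2}\le Cr^{-c_1}\norm{|x|^af}_{L^p}$ and $\norm{P_R^cf}_{L^2}\le CR^{-c_2}\bigl\||\xi|^b\widehat f\bigr\|_{L^q}$, the Hilbert--Schmidt estimate $\norm{Q_rP_R}_{2\to 2}\le 2\sqrt{rR}$, and the absorption-plus-optimisation step are all correct and do yield the multiplicative inequality there.

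There are, however, two genuine gaps. The first is your treatment of $p<2$ (or $q<2$). You claim these cases are ``less delicate'' and that the tail estimates ``can instead be obtained by transferring to the dual side via the Hausdorff--Young inequality.'' In fact no tail estimate of the form $\norm{Q_r^cf}_{L^2}\le C(r)\norm{|x|^af}_{L^p}$ can hold when $p<2$: take $f_h=h^{-1/2}\mathbf{1}_{[2r,2r+h]}$, so that $\norm{Q_r^cf_h}_{L^2}=1$ while $\norm{|x|^af_h}_{L^p}\approx (2r)^ah^{1/p-1/2}\to 0$ as $h\to 0$. (Such a spike has a huge Fourier-side norm, which is why the theorem survives; but it shows that for $p<2$ the two terms cannot be decoupled, so any decomposition that estimates each tail by its own norm separately --- yours included, with or without Hausdorff--Young --- must fail.) The H\"older step itself also breaks down there, since the conjugate exponent of $p/2$ is negative when $p<2$; the sub-$2$ range is precisely the delicate part of the Cowling--Price theorem, not the easy part. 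The second gap is the necessity direction, which you partly acknowledge: dilating a single bump only rules out the case where exactly one of $c_1,c_2$ is negative and the other positive; the borderline cases $a=\frac12-\frac1p$ or $b=\frac12-\frac1q$, and the case where both conditions fail, require the multi-bump or logarithmically-corrected families you allude to but do not construct. As written, both the ``if'' and the ``only if'' halves of the equivalence are established only on a partial range of the parameters.
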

Again, when $q=2$, $\bigl\||\xi|^b\widehat{f}\bigr\|_{L^2(\R)}=\|f\|_{\dot{W}^{b,2}}$.

In \cite{str} R. Strichartz obtained a number of uncertainty inequalities in Euclidean spaces, connecting smoothness
of a function to its concentration on a suitably uniformly distributed set.
His starting point was the following:

\begin{fact}{\bf{(Strichartz, \cite{str}).}}
{\sl Let $\{a_j\}_{-\infty}^\infty$ be a sequence of points with $a_{j+1}-a_j\le b$, $\lim_{j\to\pm\infty} a_j=\pm \infty$, and let $f\in W^{1,2}(\R)$. If $\sum_j|f(a_j)|^2\le(1-\varepsilon)^2b^{-1}\|f\|_{L^2}^2$ then $\|f'\|_{L^2}^2\ge c\varepsilon^2b^{-2}\|f\|^2_{L^2}$.}
\end{fact}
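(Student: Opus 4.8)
The plan is to derive the statement from the single clean inequality
\[
\|f\|_{L^2}^2\ \le\ b\sum_j|f(a_j)|^2\ +\ b\,\|f\|_{L^2}\,\|f'\|_{L^2},\qquad(\dagger)
\]
and then to substitute the hypothesis. Granting $(\dagger)$, the assumption $\sum_j|f(a_j)|^2\le(1-\varepsilon)^2b^{-1}\|f\|_{L^2}^2$ gives $\|f\|_{L^2}^2\le(1-\varepsilon)^2\|f\|_{L^2}^2+b\,\|f\|_{L^2}\|f'\|_{L^2}$, so that, dividing by $\|f\|_{L^2}$ (which we may take nonzero),
\[
\|f'\|_{L^2}\ \ge\ b^{-1}\bigl(1-(1-\varepsilon)^2\bigr)\|f\|_{L^2}\ =\ b^{-1}\varepsilon(2-\varepsilon)\,\|f\|_{L^2}.
\]
Squaring and using $(2-\varepsilon)^2\ge1$ for $\varepsilon\in(0,1]$ yields $\|f'\|_{L^2}^2\ge\varepsilon^2b^{-2}\|f\|_{L^2}^2$, i.e.\ the claim with $c=1$.

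To prove $(\dagger)$ I would first recall that in one dimension $W^{1,2}(\R)$ embeds into the continuous functions vanishing at infinity, so the samples $f(a_j)$ are well defined and the intervals $I_j=[a_j,a_{j+1}]$ (of length $h_j=a_{j+1}-a_j\le b$) partition $\R$. On each $I_j$, with midpoint $m_j=(a_j+a_{j+1})/2$, I start from the two exact identities $|f(x)|^2=|f(a_j)|^2+2\int_{a_j}^x\mathrm{Re}(\bar f f')\d t$ and $|f(x)|^2=|f(a_{j+1})|^2-2\int_x^{a_{j+1}}\mathrm{Re}(\bar f f')\d t$, integrate both in $x$ over $I_j$, switch the order of integration, and average. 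This gives the exact local identity
\[
\int_{I_j}|f|^2\d x\ =\ \frac{h_j}{2}\bigl(|f(a_j)|^2+|f(a_{j+1})|^2\bigr)\ -\ 2\int_{I_j}(t-m_j)\,\mathrm{Re}\bigl(\bar f(t)f'(t)\bigr)\d t.
\]

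Summing over $j$ produces $\|f\|_{L^2}^2$ on the left. For the boundary contribution, a reindexing shows the total weight multiplying each $|f(a_j)|^2$ is $(h_{j-1}+h_j)/2\le b$, so this part is at most $b\sum_j|f(a_j)|^2$. For the remainder I bound $|t-m_j|\le h_j/2\le b/2$ pointwise, $|\mathrm{Re}(\bar f f')|\le|f|\,|f'|$, and then apply a single global Cauchy--Schwarz inequality over $\R$, namely $2\sum_j\int_{I_j}|t-m_j|\,|f|\,|f'|\le b\int_\R|f|\,|f'|\le b\,\|f\|_{L^2}\|f'\|_{L^2}$; together these give $(\dagger)$.

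The crux, and the step I expect to be delicate, is obtaining the sharp coefficient $b$ — rather than $2b$ — in front of $\sum_j|f(a_j)|^2$, since any larger constant would ruin the conclusion for small $\varepsilon$. A one-sided estimate using only $f(a_j)$ cannot work: the corresponding purely local inequality is in fact false, because adding a constant to $f$ leaves $f'$ unchanged while inflating the left-hand side (this obstruction is exactly the trapezoidal-rule error). The symmetric two-sided averaging removes the asymmetry between the two endpoints, and the offending term is then controlled globally by Cauchy--Schwarz rather than interval by interval — legitimate precisely because $f\in L^2(\R)$ excludes genuine constants.
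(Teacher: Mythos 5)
Your proof is correct. I checked the details: the local identity follows from $\frac{d}{dt}|f|^2=2\,\mathrm{Re}(\overline{f}f')$ plus Fubini, the reindexing gives each $|f(a_j)|^2$ total weight $(h_{j-1}+h_j)/2\le b$, and the global Cauchy--Schwarz bound on the remainder yields $(\dagger)$; substituting the hypothesis then gives the conclusion, even with the explicit constant $c=1$. However, your route is genuinely different from the paper's. The paper never proves this Fact itself --- it quotes it from Strichartz --- and its own machinery proves instead the generalization in Proposition 2.1, valid in the critical Besov space $B^{1/p}_{p,1}(\R)$: there one expands $f$ in a compactly supported wavelet basis, estimates $|f(x)-f(a_n)|$ by splitting the expansion at the scale $j_0$ with $2^{-j_0}\sim b$, counts how many wavelet indices meet each interval $I_n$, and arrives at the two-sided estimate
\[
\left|\;\norm{f}_{L^p(\R)}-\Bigl(\sum_{n\in\Z}b_n|f(a_n)|^p\Bigr)^{1/p}\right|\le C\,b^{1/p}\norm{f}_{\dot{B}^{1/p}_{p,1}},
\]
which plays exactly the role of your inequality $(\dagger)$. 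Your argument --- fundamental theorem of calculus for $|f|^2$, symmetric trapezoidal averaging, one global Cauchy--Schwarz --- is more elementary and close in spirit to Strichartz's original proof, but it is tied to $p=2$ and to having a full derivative in $L^2$: the identity $\frac{d}{dt}|f|^2=2\,\mathrm{Re}(\overline{f}f')$ and the pairing $\int_\R|f|\,|f'|\le\norm{f}_{L^2}\norm{f'}_{L^2}$ have no direct analogue when the smoothness is only $1/p$ of a derivative measured in $L^p$. The wavelet route is what buys the paper its generality (arbitrary $p$, critical smoothness $B^{1/p}_{p,1}$, two-sided sampling inequalities, and the multivariate Theorem 1.1); your route buys simplicity and a clean constant in the classical $W^{1,2}$ setting. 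Your closing observation --- that the coefficient of $\sum_j|f(a_j)|^2$ must be $b$ rather than $2b$, that a purely local one-sided bound is impossible because constants are invisible to $f'$, and that globality of Cauchy--Schwarz is what circumvents this --- is correct and is indeed the crux of the argument.
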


Our aim here is to extend this result to $L^p$-smoothness setting as the Cowling-Price Theorem extends the classical Heisenberg Inequality. To be able to consider the values of a functions $f$ at some points, we assume that the function possesses some smoothness. For the classical case of $L^2$-norm we need at least derivative of order $1/2$. The embedding theorem suggests that we would require at least $1/p$-smoothness for $L^p$-norms. The right scale turns out to be that of Besov spaces. Our main result gives  bounds (also from below) on the trace of functions from Besov spaces on some uniformly distributed  union of subspaces. 
Note also that Paley-Wiener spaces are included in Besov spaces so that our results cover some results in sampling theory, though with non-optimal density. In particular, it implies the following statement.

\begin{theorem}\label{th:main}
Assume that  $m$ is an integer $0<m\le d$, and $1\le p<\infty$, let also $D$ be given. There exist  constants $\delta, C_1, C_2$ that depend only on $d,p,m, D$ such that  if $f\in B^{m/p}_{p,1}(\R^d)$,
$\|f\|_{\dot{B}^{m/p}_{p,1}}\le N\|f\|_p$, and $b^{m/p}<\delta N^{-1}$ then for any  
$G=\R^{m-d}\times E$, where $E\subset \R^m$ is a discrete set such that $\cup_{x\in E}[x-b,x+b]^m=\R^m$ and each point belongs to at most $D$ distinct sets in this union,
the following inequality holds
\[C_1\|f\|_{L^p}\le b^{m/p}\left(\int_G|f(x)|^pd\H^{d-m}(x)\right)^{1/p}\le C_2\|f\|_{L^p}.\] 
\end{theorem}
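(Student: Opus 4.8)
Throughout, write points of $\R^d=\R^{d-m}\times\R^m$ as $(u,v)$ with $u\in\R^{d-m}$, $v\in\R^m$, so that the quantity to be estimated is the mixed $\ell^p(L^p)$ norm
\[
\Big(\int_G|f|^p\,\d\H^{d-m}\Big)^{1/p}=\Big(\sum_{v\in E}\int_{\R^{d-m}}|f(u,v)|^p\,\d u\Big)^{1/p}=:\norm{f}_{T}.
\]
The plan is to reduce the two-sided bound, via a Littlewood--P\'alya decomposition $f=\sum_{j}\Delta_j f$ into the dyadic blocks $\Delta_j$, to a single sampling estimate for functions band-limited in the $v$-variable. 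The decisive ingredient is a Plancherel--P\'olya lemma in $\R^m$: if $h\in L^p(\R^m)$ has $\widehat h$ supported in $[-R,R]^m$ and $E$ satisfies the covering and $D$-overlap hypotheses at scale $b$, then $\sum_{v\in E}|h(v)|^p\le C\,D\,(1+bR)^{m}b^{-m}\norm{h}_{L^p(\R^m)}^p$, and, provided $bR\le c_0$ for a dimensional constant $c_0$, also $\sum_{v\in E}|h(v)|^p\ge c\,b^{-m}\norm{h}_{L^p(\R^m)}^p$. The upper bound comes from the local reverse estimate $|h(v)|^p\le C(1+bR)^m b^{-m}\int_{v+[-b,b]^m}|h|^p$ for band-limited $h$, summed over $v$ using $\sum_{v\in E}\mathbf 1_{v+[-b,b]^m}\le D$; the lower bound is the stable-sampling estimate for the Paley--Wiener space $PW^p_{[-R,R]^m}$, which applies because the covering hypothesis forces the gaps of $E$ to be $\le b$, hence sub-Nyquist once $bR\le c_0$.

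I apply this slicewise. If $\widehat g$ is supported in $\{|\cdot|\le R\}\subset\R^d$, then each slice $v\mapsto g(u,v)$ is band-limited in $v$ to $[-R,R]^m$ uniformly in $u$; inserting both inequalities and integrating in $u$ yields
\[
c\,b^{-m}\norm{g}_{L^p(\R^d)}^p\le\norm{g}_T^p\le C\,D\,(1+bR)^m b^{-m}\norm{g}_{L^p(\R^d)}^p,
\]
the left inequality again requiring $bR\le c_0$. Now choose $J$ with $b\,2^{J+1}\le c_0$ and $2^{J}\simeq c_0/b$, and split $f=f_{\le J}+f_{>J}$ with $f_{\le J}=\sum_{j\le J}\Delta_j f$. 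Since $f_{\le J}$ is band-limited in $v$ to radius $\le 2^{J+1}$, the slicewise estimate gives both $b^{m/p}\norm{f_{\le J}}_T\ge c^{1/p}\norm{f_{\le J}}_{L^p}$ and $b^{m/p}\norm{f_{\le J}}_T\le C\norm{f_{\le J}}_{L^p}\le C\norm{f}_{L^p}$, the last step using the uniform $L^p$-boundedness of the low-pass projection. For the tail I use the upper bound blockwise and the triangle inequality in $\norm\cdot_T$:
\[
b^{m/p}\norm{f_{>J}}_T\le\sum_{j>J}b^{m/p}\norm{\Delta_j f}_T\le C\sum_{j>J}(b\,2^{j})^{m/p}\norm{\Delta_j f}_{L^p}=Cb^{m/p}\sum_{j>J}2^{jm/p}\norm{\Delta_j f}_{L^p}.
\]
The last sum is $\le\norm{f}_{\dot B^{m/p}_{p,1}}\le N\norm{f}_{L^p}$, so the tail is $\le Cb^{m/p}N\norm{f}_{L^p}\le C\delta\norm{f}_{L^p}$ by the hypothesis $b^{m/p}N<\delta$; the same weighting gives $\norm{f_{>J}}_{L^p}\le\sum_{j>J}\norm{\Delta_j f}_{L^p}\le 2^{-Jm/p}\norm{f}_{\dot B^{m/p}_{p,1}}\le C\delta\norm{f}_{L^p}$.

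The two bounds now combine directly. For the upper estimate, $b^{m/p}\norm{f}_T\le b^{m/p}\norm{f_{\le J}}_T+b^{m/p}\norm{f_{>J}}_T\le(C+C\delta)\norm{f}_{L^p}=:C_2\norm{f}_{L^p}$. For the lower estimate,
\[
b^{m/p}\norm{f}_T\ge b^{m/p}\norm{f_{\le J}}_T-b^{m/p}\norm{f_{>J}}_T\ge c^{1/p}\norm{f_{\le J}}_{L^p}-C\delta\norm{f}_{L^p}\ge\big(c^{1/p}(1-C\delta)-C\delta\big)\norm{f}_{L^p},
\]
which is $\ge C_1\norm{f}_{L^p}$ once $\delta$ is chosen small in terms of $d,p,m,D$. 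All constants produced depend only on these parameters.

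I expect the sole genuine difficulty to be the Plancherel--P\'olya lemma, and within it the lower (stable-sampling) inequality: the upper bound is an elementary consequence of Bernstein's inequality applied on cubes of side $b$, but the lower bound truly uses that a band-limited function is recovered stably from its samples on a set of sub-Nyquist gap. This is exactly where the covering hypothesis and the smallness of $bR$ enter, and it is the reason the threshold must be phrased as $b^{m/p}<\delta N^{-1}$.
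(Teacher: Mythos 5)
Your architecture is sound, all the bookkeeping (the choice of $J$ with $2^J\simeq c_0/b$, the blockwise tail estimate, the absorption of the tail using $b^{m/p}N<\delta$) is correct, and the route is genuinely different from the paper's. The paper's proof never uses sampling theory for band-limited functions at all: it expands $f$ in \emph{compactly supported} wavelets, estimates the oscillation $|f(x)-f(a)|$ across each sampling cell scale by scale, sums using counting bounds ($M_j$) on how many cells a wavelet at scale $2^{-j}$ can meet, and obtains both inequalities from a single two-sided approximation estimate; the very low frequencies are not sampled but absorbed into the $L^p$ term with a small factor $2^{j_1-j_0}$. You instead split at frequency $\sim 1/b$ and treat the entire low-frequency part by a slicewise Plancherel--P\'olya theorem on $b$-dense sets of bounded density. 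What your route buys is modularity and a transparent conceptual statement (Besov sampling $=$ Paley--Wiener sampling $+$ Besov tail estimate --- which is in fact the philosophy of the paper's Section 3.2 on reconstruction). What it costs is that the lower half of your lemma \emph{is} the Feichtinger--Gr\"ochenig/Pesenson stable-sampling theorem in $L^p(\R^m)$ for irregular dense sets, a result at least as heavy as the theorem being proved; you cite it rather than prove it (you say so yourself), whereas the paper's counting argument is self-contained and also covers the curved sampling geometries of its Theorem 3.2.

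There is, moreover, a genuine error in your sketch of the ``easy'' half of the lemma. The pointwise local estimate $|h(v)|^p\le C(1+bR)^mb^{-m}\int_{v+[-b,b]^m}|h|^p$ is \emph{false} for $h\in PW^p_{[-R,R]^m}$: band-limited functions satisfy no local Nikolskii inequality at scales below $1/R$, because locally they can behave like polynomials of arbitrarily high degree. Concretely, in dimension $m=1$ with $R=1$: take a polynomial $P$ of large degree $K$ with $P(0)=1$ and $\int_{-b}^{b}|P|^p\le C_p\,b\,K^{-c}$ (the $L^p$-Christoffel function at an interior point decays like $b/K$), put $s(x)=\sin(\pi x)/(\pi x)$ and $h(x)=P(x)\,s(x/N)^{N}$ with $N=K+2$; then $h\in PW^p_{[-1,1]}$, $h(0)=1$, while $b^{-1}\int_{-b}^{b}|h|^p\to0$ as $K\to\infty$. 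So Bernstein-on-cubes cannot give your upper bound. The \emph{summed} inequality you actually need, $\sum_{v\in E}|h(v)|^p\le C\,D\,(1+bR)^mb^{-m}\norm{h}_p^p$, is nevertheless true, but its proof is global: write $h=h*\phi$ with $\widehat{\phi}=1$ on $[-R,R]^m$ and $|\phi(x)|\le C_MR^m(1+R|x|)^{-M}$, bound $|h(v)|$ by the convolution integral over all of $\R^m$, and sum over $v\in E$ using the bounded-density reformulation of your overlap hypothesis, $\#\bigl(E\cap(y+[-b,b]^m)\bigr)\le D$ for every $y$; the rapidly decaying tails then produce exactly the factor $(1+bR)^m$. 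With that repair --- plus a word justifying that $f=\sum_j\Delta_jf$ holds in the trace sense on $G$ (needed for your triangle-inequality manipulations of $\norm{\cdot}_T$, and not automatic since for $m<d$ a function in $B^{m/p}_{p,1}(\R^d)$ need not be continuous; continuity of the trace operator from $\dot B^{m/p}_{p,1}$ to $L^p$ of a slice does it) --- your argument goes through.
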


We remark that the right hand side inequality follows from localization and trace properties of the Besov spaces,
the aim is to prove the left hand side estimate.
However our argument gives both inequalities simultaneously and in particular provides a quite simple proof of the classical trace estimate. We consider the critical Besov spaces for which the trace inequality holds, the result does not hold in the spaces $B_{p,q}^s$ with $s<m/p$. The concentration principle says that a function satisfying the conditions above cannot have too many gaps, the precise statement is given by the left hand side inequality above. We also rewrite the result as sampling inequalities for functions  in the corresponding Besov spaces and apply it to give estimates for irregular approximate sampling. Similar one-dimensional approximation results for the case of regular samples were obtained recently in \cite{JOU}.

The remaining of the paper is organized as follows. We start by recalling the main facts on Besov spaces that we use. 
Section 2 is then devoted to the one-dimensional version of Theorem \ref{th:main} and some corollaries. In section 3 we prove the main result in higher dimensions  and apply it to sampling theory.

\subsection{Preliminaries on wavelets and Besov spaces}
We recall very briefly the basics of multiresolution wavelet analysis (for details
see for instance \cite{Daub}). For an arbitrary integer $N\geq 1$ one can construct  functions  $\psi^0 \in\cc^N(\R)$ (called the scaling function) and 	$\psi^1\in\cc^N(\R)$
(called the mother wavelet), with 
\begin{enumerate}
\renewcommand{\theenumi}{\roman{enumi}}
\item $\psi^0,\psi^1\in \cc^N(\R)$ and are real valued;

\item for $\ell=0,1$, $m=0,\ldots,N$ and $k\geq1$ there exists $C_k$ such that $|\partial^m\psi^\ell(x)|\leq C_k(1+|x|)^{-k}$;

\item the set of functions $\psi_{j,k}^1\,:x\to 2^{j/2}\psi^1(2^jx-k)$, $j,k\in\Z$ forms an orthonormal basis for $L^2(\R)$.

\item $\psi^1$ has $N$ vanishing moments $\dst\int_{\R}x^k\psi^1(x)\,\mbox{d}x=0$ for $k=0,\ldots,N-1$.
\end{enumerate}

We will then say that $\psi^1$ is $N$-regular.

Now, in higher dimension $d\geq 2$, we introduce
$$
\mathbf{0}^d=(0,\ldots,0),\quad \mathbf{1}^d=(1,\ldots,1)\quad\mbox{and}\quad
L^d=\{0,1\}^d\setminus\{\mathbf{0}^d\}.
$$
An orthonormal basis of $L^2(\R^d)$ is then obtained by tensorization:
for $\lambda=(\mathbf{k},\mathbf{l})\in\Z^d\times\{0,1\}^d$ with $\mathbf{k}=(k_1,\ldots,k_d)$
and $\mathbf{l}=(l_1,\ldots,l_d)$ we define
$$
\psi_{j, \lambda}(x)=\prod_{i=1}^d\psi^{l_j}_{j,k_i}(x).
$$
Any function $f\in L^2(\R^d)$ can then be written as
$$
f=\sum_{(j,\lambda)\in\Z\times\Z^d\times L^d}c_{j,\lambda}\psi_{j,\lambda}\quad\mbox{with }c_{j,\lambda}=\int_{\R^d}f(x)\psi_{j,\lambda}(x)\,\mbox{d}x.
$$

Let $s>0$, $1\leq p; q\leq\infty$. Assume that $\psi^1$ is at least $[s]+1$-regular. 
According to \cite{Bo,Ky,mey1}, the homogeneous Besov space $\dot{B}^s_{p,q}(\R^d)$ can be defined
as the space of all locally integrable functions such that the Besov norm
\begin{equation}
\label{eq:normbesov}
\norm{f}_{\dot{B}^s_{p,q}(\R^d)}=
\left(\sum_{j\in\Z}\left[2^{(s-d/p+d/2)j}\left(\sum_{\lambda\in\Z^d\times L^d}|c_{j,\lambda}|^p\right)^{\frac{1}{p}}
\right]^q\right)^{\frac{1}{q}}
\end{equation}
is finite.

An alternative definition is as follows \cite{Pet,Tri,Tbook}. Fix  an arbitrary non-negative smooth function $\rho$ supported in
$\{y\in\R^d\,:\dst\frac{1}{2}<|y|<2\}$ such that, for $y\not=0$,
$$
\sum_{j\in\Z}\rho(2^{-j}y)=1.
$$
Denote by $\ff$ the Fourier transform an $\ss'(\R^d)$ (the space of tempered distributions) and by $\ff^{-1}$ the inverse Fourier transform.
Then $\dot{B}^s_{p,q}(\R^d)$ is the space of all tempered distributions such that
\begin{equation}
\label{eq:normbesov2}
\left(\sum_{j\in\Z}2^{jsq}\norm{\ff^{-1}\bigl[\rho(2^{-j}\cdot)\ff[f]\bigr]}_{L^p(\R^d)}^q\right)^{\frac{1}{q}}.
\end{equation}
Moreover, this quantity defines an equivalent norm to $\norm{f}_{\dot{B}^s_{p,q}(\R^d)}$.
Using this norm, we see that there is a constant $C$ such that, if $\supp\ff(f)\cap\{|\xi|<b\}=\emptyset$
then
$\norm{f}_{\dot{B}^s_{p,q}(\R^d)}\geq Cb^{s-s'}\norm{f}_{\dot{B}^{s'}_{p,q}(\R^d)}$ if $s>s'$.
On the other hand, if we write $PW_b^p=\{f\in L^p\,: \supp\ff(f)\subset [-b,b]\}$ for the closed subspace of $L^p$
with distributional Fourier transform supported in $[-b,b]$, then $PW_b^p\subset B^s_{p,q}$ for every $s>0$ and $q\geq 1$.

Note that Besov spaces are also related to Sobolev spaces in the following way for $p\geq 1$ and $\eps>0$, $q'\geq q>0$
$$
W^{s+\eps,p}\hookrightarrow \dot{B}^s_{p,q}\hookrightarrow \dot{B}^s_{p,q'}\hookrightarrow W^{s-\eps,p}.
$$
Functions in the Besov space $B^{d/p}_{p,1}(\R^d)$ coincide with continuous ones almost everywhere. We will take the continuous representative.  The trace of a   function in $B_{p,1}^{d/p}(\R^d)$ on a subspace $M$ of codimension $r$ belongs to $B_{p,1}^{r/p}(M)$. We refer the reader to \cite{Tbook} and references there for the details; interesting results on local regularity of functions from the critical Besov spaces can be found in \cite{JM}.

\section{Sampling and uncertainty in dimension one}

\subsection{Localization inequalities}
First, we prove one-dimensional version of Theorem \ref{th:main} to demonstrate the main ideas avoiding technical complications. In the next section we outline the changes that should be done in multivariate case.
The proof is rather classical and shares some features with Stricharz's original approach but employs wavelet decomposition. For applications of similar techniques to irregular sampling see e.g. \cite{Gr1,Gr2}.

\begin{proposition}\label{pr:1}
Assume that $f\in B^{1/p}_{p,1}(\R)$, for some $p\ge 1$. If  $\{a_n\}_{n\in\Z}$ is an increasing sequence, $\lim_{\pm\infty}a_n=\pm\infty$, $a_{n+1}-a_n\le b$ and 
\[\sum_{n\in\Z}|f(a_n)|^p\le (1-\varepsilon)^pb^{-1}\|f\|_{L^p}^p,\] then $\|f\|_{\dot{B}^{1/p}_{p,1}}\ge c\varepsilon b^{-1/p}\|f\|_{L^p}$, where $c$ depends on $p$ only. \\
Moreover, there is a constant $C$, depending only on $p$, such that,
if $b\leq\dst C\eps\left(\frac{\|f\|_{L^p}}{\|f\|_{\dot{B}^{1/p}_{p,1}}}\right)^p$ and $a_j$ are as above with $b/2\le a_{j+1}-a_j\le b$ then
$$
\frac{1}{2b^{1/p}}\|f\|_{L^p}\leq\left(\sum_{n\in\Z}|f(a_n)|^p\right)^{1/p}
\leq\frac{5}{2b^{1/p}}\|f\|_{L^p}.
$$
\end{proposition}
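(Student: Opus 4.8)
The plan is to reduce the entire statement to a single \emph{oscillation} (trace-type) inequality and then obtain both parts by elementary manipulation. Set $I_n=[a_n,a_{n+1}]$, $\ell_n=a_{n+1}-a_n\le b$, and split $f=u+v$, where $u$ is the step function $u(x)=f(a_n)$ for $x\in I_n$ and $v(x)=f(x)-f(a_n)$ on $I_n$. Then $\|u\|_{L^p}^p=\sum_n\ell_n|f(a_n)|^p$ and $\|v\|_{L^p}^p=\sum_n\int_{I_n}|f(x)-f(a_n)|^p\,dx$, so that the triangle inequality in $L^p(\R)$ gives $\bigl|\,\|f\|_{L^p}-\|u\|_{L^p}\,\bigr|\le\|v\|_{L^p}$. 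The key estimate is
\[\|v\|_{L^p}\le C\,b^{1/p}\|f\|_{\dot{B}^{1/p}_{p,1}},\qquad(\star)\]
with $C=C(p)$, valid under the gap condition $\ell_n\le b$ alone. Granting $(\star)$, the first assertion is immediate: the hypothesis yields $\|u\|_{L^p}^p\le b\sum_n|f(a_n)|^p\le(1-\eps)^p\|f\|_{L^p}^p$, hence $\|f\|_{L^p}\le(1-\eps)\|f\|_{L^p}+Cb^{1/p}\|f\|_{\dot{B}^{1/p}_{p,1}}$, and rearranging gives $\|f\|_{\dot{B}^{1/p}_{p,1}}\ge c\eps b^{-1/p}\|f\|_{L^p}$. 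For the ``moreover'' part the separation $b/2\le\ell_n\le b$ gives $\frac b2\sum_n|f(a_n)|^p\le\|u\|_{L^p}^p\le b\sum_n|f(a_n)|^p$; feeding $\|u\|_{L^p}\le\|f\|_{L^p}+\|v\|_{L^p}$ and $\|u\|_{L^p}\ge\|f\|_{L^p}-\|v\|_{L^p}$ together with $(\star)$ and the smallness of $b$ (which forces $Cb^{1/p}\|f\|_{\dot{B}^{1/p}_{p,1}}\le\frac14\|f\|_{L^p}$) produces the two-sided bound, the stated constants $1/2$ and $5/2$ coming out after a short computation using $2^{1/p}\le 2$.

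The heart of the proof is $(\star)$, which I would establish through the wavelet decomposition $f=\sum_j g_j$, $g_j=\sum_k c_{j,k}\psi_{j,k}$, writing $A_j=(\sum_k|c_{j,k}|^p)^{1/p}$ so that $\|f\|_{\dot{B}^{1/p}_{p,1}}=\sum_j 2^{j/2}A_j$. Two standard facts about the (sufficiently regular) wavelet system are used throughout: the $L^p$ norm equivalence $\|g_j\|_{L^p}\asymp 2^{j(1/2-1/p)}A_j$ (bounded overlap and rapid decay of the $\psi_{j,k}$), and the Bernstein inequality $\|g_j'\|_{L^p}\le C2^j\|g_j\|_{L^p}$ (each $g_j$ being concentrated at frequency $2^j$). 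Fixing the cut-off scale $j_0$ with $2^{-j_0}\asymp b$, I split $f=F+H$ with $F=\sum_{j\le j_0}g_j$ (coarse) and $H=\sum_{j>j_0}g_j$ (fine), and correspondingly $\|v\|_{L^p}\le\|v_F\|_{L^p}+\|v_H\|_{L^p}$, where $v_F,v_H$ are built from $F,H$ exactly as $v$ was from $f$.

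For the coarse part I would use the mean value theorem, $|F(x)-F(a_n)|\le b\sup_{I_n}|F'|$, whence $\|v_F\|_{L^p}^p\le b^p\sum_n\ell_n\sup_{I_n}|F'|^p$. Since $F'$ is essentially band-limited to $|\xi|\lesssim b^{-1}$, a local Plancherel--Polya inequality gives $\sup_{I_n}|F'|^p\le Cb^{-1}\int_{I_n^*}|F'|^p$ over a neighbourhood $I_n^*$ of length $\asymp b$; the crucial point is that the overlap of the $I_n^*$ is controlled \emph{after} weighting by $\ell_n/b$, because $\sum_{n:\,y\in I_n^*}\ell_n\lesssim b$ for every $y$, which is precisely what makes the bound insensitive to clustering of the $a_n$ (no lower separation is assumed in the first part). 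This yields $\|v_F\|_{L^p}\le Cb\|F'\|_{L^p}\le Cb\sum_{j\le j_0}2^{j(3/2-1/p)}A_j$, and since $b\,2^{j(3/2-1/p)}=b^{1/p}2^{j/2}(b2^j)^{1-1/p}\le b^{1/p}2^{j/2}$ for $j\le j_0$ and $p\ge1$, this is $\le Cb^{1/p}\|f\|_{\dot{B}^{1/p}_{p,1}}$. For the fine part I would bound crudely $|H(x)-H(a_n)|\le|H(x)|+|H(a_n)|$, so $\|v_H\|_{L^p}\le\|H\|_{L^p}+(\sum_n\ell_n|H(a_n)|^p)^{1/p}$; the first term is $\le\sum_{j>j_0}\|g_j\|_{L^p}\asymp\sum_{j>j_0}2^{j(1/2-1/p)}A_j\le Cb^{1/p}\|f\|_{\dot{B}^{1/p}_{p,1}}$, using $(b2^j)^{-1/p}\le1$ for $j>j_0$.

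The delicate term, and the one I expect to be the main obstacle, is the sampled term $\sum_n\ell_n|H(a_n)|^p$: without a separation hypothesis one cannot naively control point values of the high-frequency, non-band-limited function $H$ by its $L^p$ norm. I would treat it scale by scale, starting from the pointwise bound $|g_j(a_n)|\le 2^{j/2}\sum_k|c_{j,k}|\,C_K(1+|2^ja_n-k|)^{-K}$ coming from the decay of $\psi^1$, followed by a Jensen/H\"older step against the summable weights $(1+|2^ja_n-k|)^{-K}$ to get $|g_j(a_n)|^p\le C2^{jp/2}\sum_k|c_{j,k}|^p(1+|2^ja_n-k|)^{-K}$. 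Summing in $n$ against the weights $\ell_n$ then reduces everything to the uniform estimate $\sum_n\ell_n(1+|2^ja_n-k|)^{-K}\le Cb$, which is an $\ell_n$-weighted Riemann-sum bound for the integrable profile $(1+|\cdot-k|)^{-K}$ with mesh $2^j\ell_n\le 2^jb$; once more the weighting by the gaps $\ell_n$ makes the bound robust to clustering. This gives the per-scale Plancherel--Polya inequality $(\sum_n\ell_n|g_j(a_n)|^p)^{1/p}\le Cb^{1/p}2^{j/2}A_j$, hence $(\sum_n\ell_n|H(a_n)|^p)^{1/p}\le Cb^{1/p}\|f\|_{\dot{B}^{1/p}_{p,1}}$, and $(\star)$ follows by summing the four contributions. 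I would finally note that this wavelet argument is uniform in $p\ge1$ (including the endpoint $p=1$, $s=1$, where a difference-modulus characterisation would be borderline) and that all constants depend only on $p$.
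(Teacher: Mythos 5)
Your proposal is correct in substance and follows essentially the same route as the paper: your key inequality $(\star)$, packaged through the step function $u$, is exactly the paper's central estimate \eqref{eq:intB} (note $\|u\|_{L^p}=(\sum_n\ell_n|f(a_n)|^p)^{1/p}$, so $\bigl|\|f\|_{L^p}-\|u\|_{L^p}\bigr|\le\|v\|_{L^p}$ is the same comparison), and you prove it by the same mechanism: wavelet expansion, cut at the scale $2^{-j_0}\asymp b$, H\"older against the wavelet localization, and gap-weighted overlap counting --- your bound $\sum_n\ell_n(1+|2^ja_n-k|)^{-K}\le Cb$ for $j>j_0$ plays precisely the role of the paper's bound \eqref{eq:mj} on $M_j$, and your endgame deriving both assertions from $(\star)$ matches the paper's. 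The organizational differences (splitting $f=F+H$ and treating the two halves by different elementary means, versus the paper's single scale-by-scale estimate of $|f(x)-f(a_n)|$ where only the bound on $|\psi(2^jx-k)-\psi(2^ja_n-k)|$ changes across $j_0$; rapidly decaying wavelets with tail weights versus the paper's compactly supported ones, which make all your tail sums finite) are cosmetic.

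The one step that does not stand as written is the ``local Plancherel--Polya inequality'' $\sup_{I_n}|F'|^p\le Cb^{-1}\int_{I_n^*}|F'|^p$, justified only by the phrase that $F'$ is ``essentially band-limited''. That is not a proof: $F'$ is not band-limited, and it is not even a member of the multiresolution space containing $F$ (differentiation leaves that space --- indeed the integer translates of $\phi'$ sum to zero, so the usual Riesz-basis and reproducing-kernel arguments for $V_j$ do not transfer to derivatives), so this sub-mean-value property would need its own argument. Fortunately the step is unnecessary: since your intervals $I_n=[a_n,a_{n+1}]$ are disjoint, the fundamental theorem of calculus and H\"older give, for $x\in I_n$,
$$
|F(x)-F(a_n)|\le\int_{I_n}|F'|\le\ell_n^{1-1/p}\left(\int_{I_n}|F'|^p\right)^{1/p},
$$
hence $\int_{I_n}|F(x)-F(a_n)|^p\,\mbox{d}x\le\ell_n^{p}\int_{I_n}|F'|^p\le b^p\int_{I_n}|F'|^p$, and summing over $n$ yields $\|v_F\|_{L^p}\le b\|F'\|_{L^p}$ with no supremum and no overlap counting at all. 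Your Bernstein-type bound $\|F'\|_{L^p}\le C\sum_{j\le j_0}2^{j(3/2-1/p)}A_j$ only uses the easy (upper-bound) direction of the wavelet $L^p$ estimates, and your computation $b\,2^{j(3/2-1/p)}=b^{1/p}2^{j/2}(b2^j)^{1-1/p}\le b^{1/p}2^{j/2}$ for $j\le j_0$, $p\ge1$, then goes through verbatim. With this repair the proof is complete, with constants depending only on $p$ and the wavelet.
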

 
\begin{proof} Let $\psi$ be a wavelet function such that $\psi(x)=0$ when $|x|>R$ ($\psi=\psi^1$ from the previous section). 
Write 
$$
f=\sum_{j\in\Z}\sum_{k\in\Z}\scal{f,\psi_{j,k}}\psi_{j,k},
$$
where $\psi_{j,k}$ are defined as above. It follows from the estimates below that the series converges uniformly. 
For each $n\in \Z$, let $I_n=[(a_{n-1}+a_{n})/2,(a_n+a_{n+1})/2]$, then $b_n=|I_n|\le b$. Fix some $n$ and  let $x\in I_n$. Then
\begin{eqnarray*}
f(x)-f(a_n)&=&\sum_{j\in\Z}\sum_{k\in\Z}\scal{f,\psi_{j,k}}\bigl(\psi_{j,k}(x)-\psi_{j,k}(a_n)\bigr)\\
&=&\sum_{j\in\Z}2^{j/2}\sum_{k\in\Z}\scal{f,\psi_{j,k}}\bigl(\psi_0(2^jx-k)-\psi_0(2^ja_n-k)\bigr).
\end{eqnarray*}
If $\psi(2^jx-k)\not=0$ then 
$$
2^jx-R\leq k\leq 2^jx+R.
$$
Write
\begin{gather*}
Z_x(j)=\{k\,: 2^jx-R\leq k\leq 2^jx+R\},\ Z_x(n,j)=Z_x(j)\cup Z_{a_n}(j),\ Z(n,j)=\bigcup_{x\in I_n}Z_x(j),
\end{gather*}
and note that $|Z_x(j)|\leq 2R+1$, therefore $|Z_x(n,j)|\leq 4R+2$.

Now, 
\begin{multline}\label{eq:f1}
|f(x)-f(a_n)|\leq\sum_{j\in\Z}2^{j/2}
\max_k|\psi(2^jx-k)-\psi(2^ja_n-k)|\sum_{k\in \Z_x(n,j)}|\scal{f, \psi_{j,k}}|\\
\le (4R+2)^{1/p'}\sum_{j\in\Z}2^{j/2}
\max_k|\psi(2^jx-k)-\psi(2^ja_n-k)|E_{n,j},
\end{multline}
the last inequality follows from H\"older's inequality with the notation $\dst\frac{1}{p}+\frac{1}{p'}=1$ and
\begin{equation}\label{eq:E}
E_{n,j}=\left(\sum_{k\in Z(n,j)}|\scal{f, \psi_{j,k}}|^p\right)^{1/p}.
\end{equation}

Further, we obtain
 $|Z(n,j)|\le 2R+1+2^jb$  and
if $|n-m|>\dst 2^{1-j}Rb^{-1}+1$ then $Z(n,j)\cap Z(m,j)=\emptyset$. Therefore, for each $j$, each $k\in\Z$
belongs to at most $\dst 2^{1-j}Rb^{-1}+2$ different $Z(n,j)$'s. 
We choose $j_0$ such that $2^{-j_0}\in (b,2b]$, and set $$M_j:=\sup_{k\in\Z}|\{n\,:k\in Z(n,j)\}|.$$
Then we get
\begin{equation}
M_j\leq 2^{1-j}Rb^{-1}+2 \leq
\begin{cases}C&\mbox{if }j\geq j_0\\
Cb^{-1}2^{-j}&\mbox{if }j< j_0
\end{cases},\label{eq:mj}
\end{equation}
where $C$ depends on $R$ only.

Clearly, for $x,a\in I_n$ we have
$$
|\psi(2^jx-k)-\psi(2^ja-k)|\le\begin{cases}2\|\psi\|_\infty,\\
2^j|x-a|\|\psi'\|_\infty.\end{cases}
$$
Then there exists a constant $C$ that depends only on $\psi$ such that
\begin{equation}\label{eq:fn}
|f(x)-f(a_n)|\le C\sum_{j\ge j_0}2^{j/2}E_{n,j}+C\sum_{j<j_0}2^{3j/2}|x-a_n|E_{n,j}. 
\end{equation}
Taking the $L^p$-norms over $x\in I_n$ and applying the triangle
 inequality, we get
\begin{eqnarray*}
\abs{\left(\int_{I_n}|f(x)|^p\,\mbox{d}x\right)^{1/p}-|f(a_n)|b_n^{1/p}}&\leq&
\left(\int_{I_n}|f(x)-f(a_n)|^p\,\mbox{d}x\right)^{1/p}\\
&\leq&C\sum_{j\ge j_0}2^{j/2}b^{1/p}E_{n,j}+C\sum_{j<j_0}2^{3j/2}b^{1+1/p}E_{n,j}.
\end{eqnarray*}

It remains to take the $\ell^p$ norm in $n$ to obtain
\begin{eqnarray*}
\abs{\norm{f}_{L^p(\R)}-\left(\sum_{n\in\Z}b_n|f(a_n)|^p\right)^{1/p}}&\leq&
\left(\sum_{n\in\Z}\ent{\int_{I_n}|f(x)|^p\,\mbox{d}x-|f(a_n)|^pb_n}\right)^{1/p}\\
&\leq&C\sum_{j\geq j_0}2^{j/2}\left(\sum_{n\in\Z}\sum_{k\in Z(n,j)}|\scal{f,\psi_{j,k}}|^p\right)^{1/p}b^{1/p}\\
&&+C\sum_{j< j_0}2^{3j/2}\left(\sum_{n\in\Z}\sum_{k\in Z(n,j)}|\scal{f,\psi_{j,k}}|^p\right)^{1/p}b^{1+1/p}.
\end{eqnarray*}
We have also
$$
\sum_{n\in\Z}\sum_{k\in Z(n,j)}|\scal{f,\psi_{j,k}}|^p\leq M_j\sum_{k\in\Z}|\scal{f,\psi_{j,k}}|^p,
$$
and applying \eqref{eq:mj}, we get
\begin{eqnarray*}
\abs{\norm{f}_{L^p(\R)}-\left(\sum_{n\in\Z}b_n|f(a_n)|^p\right)^{1/p}}&\leq&
C\sum_{j\geq j_0}2^{j/2}\left(\sum_{k\in\Z}|\scal{f,\psi_{j,k}}|^p\right)^{1/p}b^{1/p}\\
&&+C\sum_{j<j_0}2^{(2-1/p)j}\left(\sum_{k\in\Z}|\scal{f,\psi_{j,k}}|^p\right)^{1/p}b.
\end{eqnarray*}
Finally,  notice that if $j\leq j_0$ then $2^{(1-1/p)j}\leq Cb^{1/p-1}$, hence
\begin{eqnarray}\label{eq:intB}
\abs{\norm{f}_{L^p(\R)}-\left(\sum_{n\in\Z}b_n|f(a_n)|^p\right)^{1/p}}
&\leq&C\sum_{j\in\Z}2^{j/2}\left(\sum_{k\in\Z}|\scal{f,\psi_{j,k}}|^p\right)^{1/p}b^{1/p}\\
&=&C\|f\|_{\dot{B}^{1/p}_{p,1}}b^{1/p}\nonumber
\end{eqnarray}
since $\dst\frac{1}{2}=s-\frac{1}{p}+\frac{1}{2}$ when $s=1/p$.
The inequality $\|f\|_{\dot{B}^{1/p}_{p,1}}\geq cb^{-1/p}\varepsilon$ follows since $b_n\le b$.

Further, if $C\|f\|_{\dot{B}^{1+1/p}_{p,1}}b^{1/p}\leq 1/2\|f\|_{L^p(\R)}$  and $b_n=(a_{n+1}-a_{n-1})/2\ge b/2$ then
$$
\frac{1}{2b^{1/p}}\|f\|_{L^p(\R)}\leq\left(\sum_{n\in\Z}|f(a_n)|^p\right)^{1/p}
\leq\frac{5}{2b^{1/p}}\|f\|_{L^p(\R)}.
$$
\end{proof}

\subsection{Some corollaries}
As a first application of this proposition, let us establish the following version of the Uncertainty Principle:

\begin{corollary} Let $p\geq 1$ and $\alpha>0$. Let $f\in L^p$ then there exists $c$ that depends only on $p$ and $\alpha$ such that 
\[\||x|^{\alpha/p}f\|_{L^p(\R)}\|f\|_{\dot{B}^{1/p}_{p,1}}^\alpha\ge c_p\|f\|^{1+\alpha}_{L^p(\R)}.\]
\end{corollary}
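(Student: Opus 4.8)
The plan is to control the $L^p$-mass of $f$ by splitting $\R$ at a single scale $T>0$ into the bounded region $[-T,T]$ and its complement, estimating each separately, and then optimising in $T$. For the tail, Chebyshev's inequality gives at once
\[
\int_{|x|>T}|f(x)|^p\,\d x\le T^{-\alpha}\int_{|x|>T}|x|^\alpha|f(x)|^p\,\d x\le T^{-\alpha}\,\||x|^{\alpha/p}f\|_{L^p}^p .
\]
For the central part I would invoke the critical embedding $\dot B^{1/p}_{p,1}(\R)\hookrightarrow L^\infty$, that is $\|f\|_{L^\infty}\le C_p\|f\|_{\dot B^{1/p}_{p,1}}$; this is the boundedness underlying the continuity of functions in $B^{1/p}_{p,1}$ recalled in the preliminaries, and it can also be read off from the pointwise oscillation bounds \eqref{eq:fn} in the proof of Proposition \ref{pr:1}. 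It yields the local mass estimate
\[
\int_{-T}^{T}|f(x)|^p\,\d x\le 2T\,\|f\|_{L^\infty}^p\le 2C_p^p\,T\,\|f\|_{\dot B^{1/p}_{p,1}}^p .
\]

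Adding the two bounds gives, for every $T>0$,
\[
\|f\|_{L^p}^p\le 2C_p^p\,\|f\|_{\dot B^{1/p}_{p,1}}^p\,T+\||x|^{\alpha/p}f\|_{L^p}^p\,T^{-\alpha}.
\]
The right-hand side has the form $AT+BT^{-\alpha}$ and is minimised at $T_\ast=(\alpha B/A)^{1/(1+\alpha)}$, with minimal value a dimensional constant times $A^{\alpha/(1+\alpha)}B^{1/(1+\alpha)}$. Substituting $A=2C_p^p\|f\|_{\dot B^{1/p}_{p,1}}^p$ and $B=\||x|^{\alpha/p}f\|_{L^p}^p$ and raising the resulting inequality to the power $(1+\alpha)/p$ produces
\[
\|f\|_{L^p}^{1+\alpha}\le C\,\|f\|_{\dot B^{1/p}_{p,1}}^{\alpha}\,\||x|^{\alpha/p}f\|_{L^p},
\]
which is the asserted inequality. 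The degenerate cases $\|f\|_{\dot B^{1/p}_{p,1}}\in\{0,\infty\}$ and $\||x|^{\alpha/p}f\|_{L^p}=\infty$ are trivial (the first forces $f\equiv0$, since a polynomial in $L^p$ vanishes), and excluding them guarantees that $T_\ast$ is finite and positive.

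The only substantial ingredient is the local mass estimate, equivalently the $L^\infty$ control, and I expect it to be the heart of the matter: it is precisely where the critical index $s=1/p$ is used. Its scale-invariance — both $\|f\|_{L^\infty}$ and $\|f\|_{\dot B^{1/p}_{p,1}}$ are unchanged under $f\mapsto f(\lambda\,\cdot)$ — is also what makes the final inequality dilation-invariant, and one checks directly that both sides scale identically, so the optimisation in $T$ is forced to close. If one prefers to remain strictly within the framework of Proposition \ref{pr:1}, the same local bound follows from its proof: \eqref{eq:fn} together with $|f(a_n)|^p\lesssim b^{-1}\int_{I_n}|f|^p+\dots$ controls $\sup_{I_n}|f|$ by the wavelet sums on the appropriate scale, hence by $\|f\|_{\dot B^{1/p}_{p,1}}$; either route reduces the Corollary to the one-parameter optimisation above.
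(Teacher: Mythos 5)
Your proof is correct, but it takes a genuinely different route from the paper. You split $\R$ at a single scale $T$, handle the tail by Chebyshev, handle $[-T,T]$ via the critical embedding $\dot B^{1/p}_{p,1}(\R)\hookrightarrow L^\infty$, and optimise in $T$; the scaling check confirms the balance closes. The paper also starts with the same Chebyshev tail bound, but then proceeds quite differently: it normalises $\|f\|_{L^p}=1$, uses an averaging (pigeonhole) argument to find a shift $a\in(-b/4,b/4)$ such that the arithmetic progression $\{a+jb/2\}_{j\neq 0}$ carries small $\ell^p$ mass, chooses $b$ so that this sampled sum is at most $\tfrac12 b^{-1}$, and then invokes Proposition \ref{pr:1} (the sampling uncertainty principle) to conclude $\|f\|_{\dot B^{1/p}_{p,1}}\geq Cb^{-1/p}=CA^{-1/\alpha}$. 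What each approach buys: the paper's argument exhibits the corollary as a direct application of its main one-dimensional result, which is the structural point of that section; yours is more self-contained and elementary, needing only the standard critical embedding (itself provable by the same wavelet estimates, or extractable from \eqref{eq:fn} together with the observation that $f\in L^p$ is small at some point of every long interval, as you note), and it yields the inequality by a clean one-parameter optimisation rather than via sampling. One small caveat for your write-up: the embedding $\|f\|_{L^\infty}\leq C_p\|f\|_{\dot B^{1/p}_{p,1}}$ for the \emph{homogeneous} norm requires knowing that the wavelet (or Littlewood--Paley) expansion converges to $f$ itself and not to $f$ modulo a nonzero polynomial; this is guaranteed here because $f\in L^p$ and a polynomial in $L^p$ vanishes, which is exactly the point you flag in your degenerate-case discussion, so the gap is only expository.
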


Note that a stronger version has been recently obtained  by J. Martin and M. Milman in \cite{MM}, see the lines following inequality (5.13) in \cite{MM}.

\begin{proof}
We may assume that $\|f\|_p=1$. Suppose that $\||x|^{\alpha/p}f\|_p=A$ then for any $b>0$
\[
\int_{|x|>b/4}|f(x)|^p\,\mbox{d}x\le (4b^{-1})^\alpha\int_{|x|>b/4}|x|^\alpha|f(x)|^p\mbox{d}x\le 4^\alpha b^{-\alpha}A^p.
\]
But
$$
\int_{|x|>b/4}|f(x)|^p\,\mbox{d}x=\int_{-b/4}^{b/4}\sum_{j\in\Z\setminus\{0\}}|f(x+jb/2)|^p\,\mbox{d}x,
$$
therefore there exists $a\in (-b/4, b/4)$ such that $\sum_{j\in\Z\setminus\{0\}}|f(a+jb/2)|^p\le 4^\alpha b^{-1-\alpha}A^p$.

We choose $b$ such that $4^\alpha b^{-\alpha}A^p=1/2$. Then by the proposition above we obtain 
\[
\|f\|_{\dot{B}^{1/p}_{p,1}}\ge Cb^{-1/p}=CA^{-1/\alpha}.
\]
This implies the required inequality.
\end{proof}

We will now show some sampling estimates for functions in certain Besov spaces:

\begin{corollary} Let $f\in B^{1/p}_{p,1}(\R)$ and $a=\{a_j\}$ be a sequence as above.
\begin{enumerate}
\renewcommand{\theenumi}{\roman{enumi}}
\item If $S_1(f,a)$ is the piece-wise linear interpolant of $f$ at $a$. Then 
\[
\|f-S_1(f,a)\|_{L^p}\le Cb^{1/p}\|f\|_{\dot{B}^{1/p}_{p,1}}.
\]
\item There exists a bandlimited function $g\in[-cb^{-1},cb^{-1}]$ such that
\[ \|f-g\|_{L^p}\le Cb^{1/p}\|f\|_{\dot{B}^{1/p}_{p,1}}\quad {\text{and}}\quad \|f(a_j)-g(a_j)\|_{\ell^p}\le C\|f\|_{\dot{B}^{1/p}_{p,1}}.\]
If in addition $f\in B^{s}_{p,\infty}$ for some $s>1/p$ then 
\[\|f-g\|_{L^p}\le C_sb^{s}\|f\|_{\dot{B}^{s}_{p,\infty}}\quad {\text{and}}\quad \|f(a_j)-g(a_j)\|_{\ell^p}\le C_sb^{s-1/p}\|f\|_{\dot{B}^{s}_{p,\infty}}.\] 
\end{enumerate}
\end{corollary}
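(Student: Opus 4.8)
\emph{Part (i).} The plan is to reduce the interpolation error to the pointwise estimate already obtained in the proof of Proposition \ref{pr:1}. On each interval $[a_n,a_{n+1}]$ the interpolant $S_1(f,a)$ is the affine function through $(a_n,f(a_n))$ and $(a_{n+1},f(a_{n+1}))$, so for $x\in[a_n,a_{n+1}]$ I would write
\[
f(x)-S_1(f,a)(x)=\bigl(f(x)-f(a_n)\bigr)-\frac{x-a_n}{a_{n+1}-a_n}\bigl(f(a_{n+1})-f(a_n)\bigr)
\]
and, using $0\le(x-a_n)/(a_{n+1}-a_n)\le 1$, bound the right-hand side by $|f(x)-f(a_n)|+|f(a_{n+1})-f(a_n)|$. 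Both terms have precisely the form controlled by \eqref{eq:fn}: the first because $|x-a_n|\le b$, the second by specializing that estimate to $x=a_{n+1}$ (again $|a_{n+1}-a_n|\le b$). Hence each is dominated by $C\sum_{j\ge j_0}2^{j/2}E_{n,j}+C\sum_{j<j_0}2^{3j/2}bE_{n,j}$, and the remainder of the argument in Proposition \ref{pr:1} --- taking $L^p$-norms in $x$, then the $\ell^p$-norm in $n$, and invoking \eqref{eq:mj} --- applies verbatim to give $\|f-S_1(f,a)\|_{L^p}\le Cb^{1/p}\|f\|_{\dot{B}^{1/p}_{p,1}}$.

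\emph{Part (ii), construction of $g$.} I would take $g$ to be a Littlewood--Paley low-pass truncation. With the blocks $f_j=\ff^{-1}[\rho(2^{-j}\cdot)\ff f]$ and $j_1$ chosen so that $2^{j_1}\in(b^{-1},2b^{-1}]$, set $g=\sum_{j\le j_1}f_j=f-\sum_{j>j_1}f_j$; since $\ff g=\ff f\cdot\sum_{j\le j_1}\rho(2^{-j}\cdot)$ is supported in $\{|\xi|<2^{j_1+1}\}$, we have $g\in PW_{cb^{-1}}^p$. The error is the high-frequency tail $f-g=\sum_{j>j_1}f_j$, and from the norm \eqref{eq:normbesov2} with $s=1/p$, $q=1$,
\[
\|f-g\|_{L^p}\le\sum_{j>j_1}\|f_j\|_{L^p}\le 2^{-j_1/p}\sum_{j>j_1}2^{j/p}\|f_j\|_{L^p}\le 2^{-j_1/p}\|f\|_{\dot{B}^{1/p}_{p,1}}\le Cb^{1/p}\|f\|_{\dot{B}^{1/p}_{p,1}},
\]
which is the first bound; moreover $\|f-g\|_{\dot{B}^{1/p}_{p,1}}=\sum_{j>j_1}2^{j/p}\|f_j\|_{L^p}\le\|f\|_{\dot{B}^{1/p}_{p,1}}$. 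For the refined case I would instead use $\|f_j\|_{L^p}\le 2^{-js}\|f\|_{\dot{B}^s_{p,\infty}}$ from \eqref{eq:normbesov2} with $q=\infty$ and sum the geometric tails, obtaining $\|f-g\|_{L^p}\le C_sb^s\|f\|_{\dot{B}^s_{p,\infty}}$ and $\|f-g\|_{\dot{B}^{1/p}_{p,1}}=\sum_{j>j_1}2^{j(1/p-s)}2^{js}\|f_j\|_{L^p}\le C_sb^{s-1/p}\|f\|_{\dot{B}^s_{p,\infty}}$.

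\emph{Part (ii), sampling estimates.} The key step is to apply the inequality \eqref{eq:intB} of Proposition \ref{pr:1} to $h=f-g\in B^{1/p}_{p,1}$, which gives
\[
\Bigl(\sum_{n}b_n|f(a_n)-g(a_n)|^p\Bigr)^{1/p}\le\|f-g\|_{L^p}+C\|f-g\|_{\dot{B}^{1/p}_{p,1}}b^{1/p}.
\]
Feeding in the two bounds just obtained gives $Cb^{1/p}\|f\|_{\dot{B}^{1/p}_{p,1}}$ in the first case and $C_sb^s\|f\|_{\dot{B}^s_{p,\infty}}$ in the refined case. Since for a sequence with $b/2\le a_{n+1}-a_n\le b$ one has $b_n\ge b/2$, dividing out the weight turns these into $\|f(a_j)-g(a_j)\|_{\ell^p}\le C\|f\|_{\dot{B}^{1/p}_{p,1}}$ and $\|f(a_j)-g(a_j)\|_{\ell^p}\le C_sb^{s-1/p}\|f\|_{\dot{B}^s_{p,\infty}}$ respectively.

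As for the main obstacle, all the analytic estimates are inherited from Proposition \ref{pr:1}, so the only genuinely new point is the geometric summation of the high-frequency tail in the refined estimates: the hypothesis $s>1/p$ is exactly what makes $\sum_{j>j_1}2^{j(1/p-s)}$ converge and produces the positive power $b^{s-1/p}$. I would also check that the single overlapping band at $j=j_1$ does not spoil the clean splitting $\|f-g\|_{\dot{B}^{1/p}_{p,1}}\le\|f\|_{\dot{B}^{1/p}_{p,1}}$; this is harmless, being absorbed into the constant.
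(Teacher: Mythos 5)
Your proposal is correct, and its overall architecture is the same as the paper's: part (i) reduces the interpolation error to the pointwise estimate \eqref{eq:fn} from the proof of Proposition \ref{pr:1} (the paper uses the convexity bound $|f(x)-S_1(f,a)(x)|\le\max\{|f(x)-f(a_n)|,|f(x)-f(a_{n+1})|\}$, you use an equivalent algebraic rewriting), and part (ii) splits $f=g+h$ at frequency scale $b^{-1}$, bounds $\|h\|_{L^p}$ and $\|h\|_{\dot{B}^{1/p}_{p,1}}$, and then applies \eqref{eq:intB} to $h$, exactly as the paper does. The one genuine difference is the tool used for the splitting: the paper keeps the wavelet expansion and sets $g=\sum_{j\le j_0}\sum_k\scal{f,\psi_{j,k}}\psi_{j,k}$, but must then assume the wavelet $\psi^1$ is \emph{band-limited} (Meyer-type) so that $g$ is indeed bandlimited --- a quiet change of basis, since the rest of the paper works with compactly supported wavelets; all tail estimates are then read off from the discrete norm \eqref{eq:normbesov}. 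You instead use the Littlewood--Paley truncation $g=\sum_{j\le j_1}\ff^{-1}[\rho(2^{-j}\cdot)\ff f]$ and the continuous norm \eqref{eq:normbesov2}, which makes the bandlimitedness of $g$ immediate and avoids the change of wavelet, at the small cost of the overlapping-band issue you correctly flag: $\|f-g\|_{\dot{B}^{1/p}_{p,1}}$ is not exactly $\sum_{j>j_1}2^{j/p}\|f_j\|_{L^p}$ because adjacent $\rho(2^{-j}\cdot)$ overlap, but a standard multiplier bound absorbs this into the constant. Two further points you share with the paper and handle at least as explicitly: the separation hypothesis $a_{n+1}-a_n\ge b/2$ is what lets you divide out the weights $b_n$ in \eqref{eq:intB}, and in part (i) the estimate \eqref{eq:fn} is applied on $[a_n,a_{n+1}]$ rather than on $I_n$ (so strictly one re-runs its derivation with the slightly larger index sets $Z$ taken over that interval; since $|a_{n+1}-a_n|\le b$ the constants are unchanged --- the paper's own proof glosses over the same point).
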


\begin{proof} The first statement follows directly from the proof of the theorem. Clearly, 
\[|f(x)-S_1(f,a)(x)|\le\max\{|f(x)-f(a_{n+1})|, |f(x)-f(a_n)|\}\quad\text{for}\quad x\in[a_n,a_{n+1}].\] 
Thus integrating \eqref{eq:f1} we obtain the required inequality.

To prove the second statement, let us now assume that $\psi_1$ is regular and band-limited to some interval $[-\Omega,\Omega]$.
Let $f\in \dot{B}^{1/p}_{p,q}$ and write
$$
f=\sum_{j\in\Z}\sum_{k\in\Z}\scal{f,\psi_{j,k}}\psi_{j,k}.
$$
Let $j_0$ be such that $2^{j_0}\leq b^{-1}\leq 2^{j_0+1}$ and define
$$
g=\sum_{j\leq j_0}\sum_{k\in\Z}\scal{f,\psi_{j,k}}\psi_{j,k}\quad\mbox{and}\quad h=f-g.
$$
Then $
\|h\|_{\dot{B}^{1/p}_{p,q}}\le C\|f\|_{\dot{B}^{1/p}_{p,q}}. $
Further we have
\begin{eqnarray*}
 \|h\|_{L^p}&\leq&C\left(\sum_{j>j_0}2^{(-1+p/2)j}\sum_{k}|\scal{f,\psi_{j,k}}|^p\right)^{1/p}\\
&\leq&C2^{-j_0/p}\left(\sum_{j>j_1}2^{pj/2}\sum_{k}|\scal{f,\psi_{j,k}}|^p\right)^{1/p}\\
&\leq&Cb^{1/p}\sum_{j>j_0}2^j\left(\sum_{k}|\scal{f,\psi_{j,k}}|^p\right)^{1/p}
\end{eqnarray*}
and thus $ \|h\|_{L^p}\leq Cb^{1/p}\|h\|_{\dot{B}^{1/p}_{p,q}}\le Cb^{1/p}\|f\|_{\dot{B}^{1/p}_{p,q}}$.

Moreover, if we assume that $f\in B^{s}_{p,\infty}$ for some $s>1/p$ then 
\[\|h\|_{\dot{B}^{1/p}_{p,1}}\le C_sb^{s-1/p}\|h\|_{\dot{B}^{s}_{p,\infty}}\le C_sb^{s-1/p}\|f\|_{\dot{B}^{s}_{p,\infty}}\quad{\text{and}}\quad
\|h\|_{L^p}\le Cb^{s}\|f\|_{\dot{B}^{s}_{p,\infty}}.\] 
Now, we apply \eqref{eq:intB} to $h$ and obtain 
\[
\left(\sum_{n\in\Z}|h(a_n)|^p\right)^{1/p}\le C\left(b^{-1/p}\|h\|_{L^p}+\|h\|^p_{\dot{B}^{1/p}_{p,1}}\right). 
\]
The required inequalities follow.
\end{proof}

Some results on regular smooth spline interpolation in such Besov spaces were obtained
in \cite{JOU}, we discuss a different sampling for the multivariate case  in the next section.

\section{Multivariate concentration inequality and  irregular sampling}

\subsection{Localization inequalities in higher dimensions}
Similar argument as in the previous section gives the sampling inequalities of Theorem \ref{th:main} formulated in the introduction for functions with small norms in appropriate homogeneous Besov spaces. Example of spaces of functions with homogeneous Besov norms controlled by the corresponding Lebesgue norm are bandlimited functions or more generally functions  in shift-invariant subspaces,  see \cite{A,AG}. Sampling of bandlimited functions from irregular point sets is a well developed topic, see for example \cite{FG1,FG,Gr1,Gr2,LM,Mad,Pes}; new interesting results on sampling from trajectories can be found in \cite{GRUV}. We obtain the sampling inequalities under  much milder smoothness assumptions and reduce the questions of approximate sampling of functions in Besov spaces to those of bandlimited functions.

Let us now describe the general setting of our sampling sets. Our aim here is to provide a description of sampling
sets that are intuitive and easy to check rather than a fully general definition that would be too complicated
to check in practice.
Let $1\leq m\leq d$ be an integer and $b,C_0>0$ be real. 

First we take $G$ to be a finite or countable union of $d-m$ dimensional $\cc^1$ manifolds (a countable set when $m=d$).
To each $a\in G$ we attach an $m$-dimensional manifold $H_a$ in a sufficiently regular way ({\it e.g.}
$H_a$ may be defined through an implicit function). Each $H_a$ is endowed with a measure $\nu_a$ that is absolutely continuous with respect to the corresponding Hausdorff (surface) measure on $H_a$,  $\nu_a=\ffi\mbox{d}\H^{m}\Big|_{H_a}$ with $C_0^{-1}\leq\ffi\leq C_0$, measures $\nu_a$ depend on $a$ also in a regular way, see (ii) below. 
 We further assume that
\begin{enumerate}
\renewcommand{\theenumi}{\roman{enumi}}
\item for any $a$ the diameter of $H_a$ is bounded in the following way $b/2\leq\diam(H_a)\leq b$,

\item for every measurable set $E\subset \R^d$ sets $E\cap H_a$ are $\nu_a$-measurable for $\H^{m-d}$-almost all $a\in G$ and $C_0^{-1}\lambda_d(E)\le\int_G \nu_a(E\cap H_a)d\H^{m-d}(a)\le C_0\lambda_d(E)$; 
this implies that
for every 
$f\in L^1(\R^d)$,
\begin{equation}\label{eq:equiv}
\frac{1}{C_0}\int_{\R^d}|f(x)|\,\mbox{d}x\leq
\int_G\int_{H_a}|f(x)|\,\mbox{d}\nu_a(x)\,\mbox{d}\H^{d-m}(a)\leq
C_0\int_{\R^d}|f(x)|\,\mbox{d}x,
\end{equation}

\item for every $x\in\R^d$, $R>0$,
\begin{equation}\label{eq:mes2}
C_0^{-1}\min(R,b)^m\leq \nu_a\bigl(B(x,R)\cap H_a\bigr)\leq C_0\min(R,b)^m,
\end{equation}

\item for every $x\in\R^d$, $R>0$
\begin{equation}\label{eq:mes}
\H^{d-m}(G\cap B(x,R))\le C_0 R^{d-m}\max\{1, Rb^{-1}\}^m.
\end{equation}

\end{enumerate}

\begin{examplenum}\rm  (i) Let $(a_n)_{n\in\Z}$ be an increasing sequence such that 
$\frac{b}{2}\leq a_{n+1}-a_n\leq b$
then we can take $G=\cup_{n\in\Z}P_n$, where $P_n=\R^{d-1}\times\{a_n\}$, here $m=1$. 
For $a=(\alpha,a_n)\in G$ we take $\dst H_a=\{\alpha\}\times\ent{\frac{a_{n-1}+a_n}{2},\frac{a_{n+1}+a_n}{2}}$.

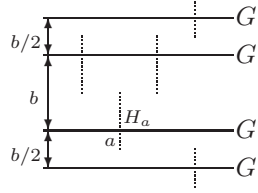
\begin{figure}[!ht]
\begin{center}
\setlength{\unitlength}{0.5cm}
\begin{picture}(7,5)
\dottedline{0.02}(1,0.5)(6,0.5)\put(6.1,0.3){$G$}
\dottedline{0.02}(1,1.5)(6,1.5)\put(6.1,1.3){$G$}
\dottedline{0.02}(1,3.5)(6,3.5)\put(6.1,3.3){$G$}
\dottedline{0.02}(1,4.5)(6,4.5)\put(6.1,4.3){$G$}

\put(1.1,0.5){\vector(0,1){1}}
\put(1.1,1.5){\vector(0,-1){1}}
\put(1.1,1.5){\vector(0,1){2}}
\put(1.1,3.5){\vector(0,-1){2}}
\put(1.1,3.5){\vector(0,1){1}}
\put(1.1,4.5){\vector(0,-1){1}}

\put(0.1,0.7){$\scriptstyle{b/2}$}\put(0.6,2.2){$\scriptstyle{b}$}\put(0.1,3.7){$\scriptstyle{b/2}$}
\put(2.6,1.1){$\scriptstyle{a}$}\dottedline{0.1}(3,1)(3,2.5)\put(3.1,1.7){$\scriptstyle{H_a}$}
\dottedline{0.1}(2,4)(2,2.5)\dottedline{0.1}(4,4)(4,2.5)\dottedline{0.1}(5,0)(5,1)\dottedline{0.1}(5,4)(5,5)
\end{picture}
\caption{ Example (i) in dimension $2$, $G=\bigcup\R\times\{a_n\}$ }
\label{fig:cor}
\end{center}
\end{figure}

(ii) The hyperplanes $P_n$ can be replaced by more general manifolds. For instance,
let $(a_n)_{n\in\Z}$ be a sequences such that $\frac{b}{2}\leq a_{n+1}-a_n\leq b$ and let
$f\,:\R^{d-1}\to\R$ be smooth bounded function with bounded derivative. Let $\tilde{P}_n=\bigl\{\bigl(x',f(x')+a_n\bigr),x'\in\R^{d-1}\bigr\}$
then we can take $G=\cup_{n\in\Z}\tilde{P}_n$. Then if $a=\bigl(x',f(x')+a_n\bigr)$ we can again set
$H_a=\{a+(0,t)\,:|t|\leq b+\max|f|\}$.

(iii) For each $k\in\Z^{d-1}$, let $f_{k}\,:\R\to\R^{d-1}$ be a smooth function such that $f_{k}$ takes its values in the 
$\ell^\infty$-ball centered at $bk$ of radius $b/4$ and such that the derivatives are uniformly bounded from above and below
$C_0^{-1}\leq \|f_k^\prime\|\leq C_0$. Let $P_k=\bigl\{\bigl(f_k(x),x\bigr),x\in\R\bigr\}\subset\R^d$ and $G=\bigcup_{k\in\Z^{d-1}}P_k$.
To each $\bigl(f_k(a),a\bigr)\in G$ we associate $H_a$ to be the $\ell^\infty$-ball centered at $(bk,a)$ of radius $b/4$ endowed with the Lebesgue measure.

\begin{figure}[!ht]
\begin{center}
\setlength{\unitlength}{0.5cm}
\begin{picture}(12,12)

\dottedline{0.02}(1,1)(5,1)\dottedline{0.2}(9,1)(5,1)\dottedline{0.2}(13,5)(9,5)
\dottedline{0.02}(1,1)(3,3)\dottedline{0.2}(9,1)(11,3)\dottedline{0.2}(13,5)(11,3)
\dottedline{0.02}(7,3)(3,3)\dottedline{0.2}(7,3)(11,3)
\dottedline{0.02}(7,3)(5,1)\dottedline{0.2}(7,3)(5,1)\dottedline{0.2}(7,3)(9,5)

\put(1,0.8){\vector(1,0){4}}\put(5,0.8){\vector(-1,0){4}}\put(3,0.1){$\scriptstyle{b}$}
\put(1,1.5){\vector(1,1){1}}\put(2,2.5){\vector(-1,-1){1}}
\put(0,2){$\scriptstyle{b/2}$}
\dottedline{0.2}(1,6)(5,6)
\dottedline{0.2}(1,6)(3,8)
\dottedline{0.2}(7,8)(3,8)
\dottedline{0.2}(7,8)(5,6)
\put(5.2,7.5){$\scriptstyle{H_a}$}
\put(3.7,7){$\scriptstyle{a}$}

\dottedline{0.1}(2.5,1.5)(4.5,1.5)
\dottedline{0.1}(2.5,1.5)(3.5,2.5)
\dottedline{0.1}(4.5,1.5)(5.5,2.5)
\dottedline{0.1}(3.5,2.5)(5.5,2.5)

\qbezier[200](4,2)(5,4)(4,6)\qbezier[200](4,6)(3,8)(4,10)\put(4.1,9.5){$G$}
\qbezier[200](8,2)(7,4)(8,6)\qbezier[200](8,6)(9,8)(8,10)\put(8.2,9.8){$G$}
\qbezier[200](10,4)(9.5,6)(10,8)\qbezier[200](10,8)(10.5,10)(10,12)\put(10.1,11.7){$G$}
\end{picture}
\caption{Example (iii) in dimension 3}
\label{fig:cor}
\end{center}
\end{figure}
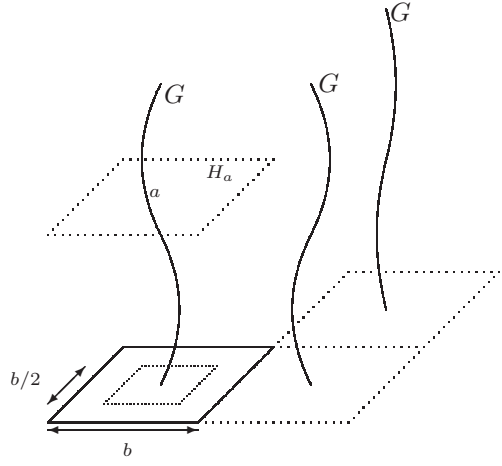

\end{examplenum}

We will prove the following generalization of Theorem 1.1

\begin{theorem} Suppose that $1\le p\le \infty$ and $G$ as above.
There exist  constants $\delta, C_1, C_2$ that depend only on $d,p,m,D, C_0$  such that  if $f\in B^{m/p}_{p,1}(\R^d)$,
$\|f\|_{\dot{B}^{m/p}_{p,1}}\le N\|f\|_p$, and $b^{m/p}<\delta N^{-1}$ then
\[C_1\|f\|_{L^p}\le b^{m/p}\left(\int_G|f(x)|^pd\H^{d-m}(x)\right)^{1/p}\le C_2\|f\|_{L^p}.\] 
\end{theorem}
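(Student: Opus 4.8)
The plan is to transplant the one–dimensional scheme of Proposition~\ref{pr:1} to the present setting, replacing the elementary interval estimates by the geometric–measure–theoretic facts (i)--(iv). First I would reduce the statement to an oscillation estimate. Applying \eqref{eq:equiv} to $|f|^p$ gives $C_0^{-1}\|f\|_{L^p}^p\le\int_G\int_{H_a}|f(x)|^p\d\nu_a(x)\d\H^{d-m}(a)\le C_0\|f\|_{L^p}^p$, while \eqref{eq:mes2} with $R=\diam(H_a)\in[b/2,b]$ yields $\nu_a(H_a)\approx b^m$. By Minkowski's inequality,
\[\abs{\Bigl(\int_G\int_{H_a}|f(x)|^p\d\nu_a\d\H^{d-m}\Bigr)^{1/p}-\Bigl(\int_G\nu_a(H_a)|f(a)|^p\d\H^{d-m}\Bigr)^{1/p}}\le\mathrm{Err},\]
where $\mathrm{Err}:=\bigl(\int_G\int_{H_a}|f(x)-f(a)|^p\d\nu_a\d\H^{d-m}\bigr)^{1/p}$. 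Since the first term is $\approx\|f\|_{L^p}$ and the second is $\approx b^{m/p}\bigl(\int_G|f|^p\d\H^{d-m}\bigr)^{1/p}$, the theorem follows once I show $\mathrm{Err}\le\eps\|f\|_{L^p}$ with $\eps=\eps(\delta)\to0$ as $\delta\to0$.

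Next I would expand $f=\sum_j g_j$, $g_j=\sum_\lambda c_{j,\lambda}\psi_{j,\lambda}$, split at $j_0$ with $2^{-j_0}\in(b,2b]$, and estimate $\mathrm{Err}_j:=\bigl(\int_G\int_{H_a}|g_j(x)-g_j(a)|^p\d\nu_a\d\H^{d-m}\bigr)^{1/p}$ scale by scale, using $\|g_j\|_{L^p}\approx 2^{(d/2-d/p)j}(\sum_\lambda|c_{j,\lambda}|^p)^{1/p}$ and $\|f\|_{\dot B^{m/p}_{p,1}}\approx\sum_j 2^{jm/p}\|g_j\|_{L^p}$. For the fine scales $j\ge j_0$ the naive pointwise difference bound overcounts, producing a spurious factor $(2^jb)^{(d-m)/p}\ge1$ when $m<d$; instead I use $|g_j(x)-g_j(a)|^p\lesssim|g_j(x)|^p+|g_j(a)|^p$, bounding $\int_G\int_{H_a}|g_j(x)|^p\approx\|g_j\|_{L^p}^p$ by \eqref{eq:equiv}, and, for the base point, $\int_G\nu_a(H_a)|g_j(a)|^p\d\H^{d-m}\lesssim b^m\int_G|g_j|^p\d\H^{d-m}$ together with the trace estimate $\int_G|g_j|^p\d\H^{d-m}\lesssim 2^{jdp/2}2^{-(d-m)j}\sum_\lambda|c_{j,\lambda}|^p$ (from \eqref{eq:mes} with $R=2^{-j}\le b$ and bounded overlap of wavelet supports). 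Both give $\mathrm{Err}_j\lesssim b^{m/p}2^{jm/p}\|g_j\|_{L^p}$. For the coarse scales $j<j_0$ I keep the difference and use $|\psi_{j,\lambda}(x)-\psi_{j,\lambda}(a)|\lesssim 2^{jd/2}2^j|x-a|$ (only the $m$ transverse coordinates vary, by $\le b$); Hölder over the bounded number of wavelets meeting $H_a$, the bound $\int_{H_a}|x-a|^p\d\nu_a\lesssim b^{p+m}$, and the counting $\H^{d-m}(\{a:\psi_{j,\lambda}\text{ meets }H_a\})\lesssim 2^{-jd}b^{-m}$ (from \eqref{eq:mes} with $R\approx2^{-j}>b$) give $\mathrm{Err}_j\lesssim 2^jb\,\|g_j\|_{L^p}$.

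It remains to sum. Since $\sum_j 2^{jm/p}\|g_j\|_{L^p}\approx\|f\|_{\dot B^{m/p}_{p,1}}\le N\|f\|_{L^p}$ and $b^{m/p}<\delta N^{-1}$, the fine scales contribute $\sum_{j\ge j_0}\mathrm{Err}_j\lesssim b^{m/p}\|f\|_{\dot B^{m/p}_{p,1}}<\delta\|f\|_{L^p}$. When $p\ge m$ the coarse bound satisfies $2^jb\|g_j\|_{L^p}\le b^{m/p}2^{jm/p}\|g_j\|_{L^p}$ (as $2^jb\le1$ and $1-m/p\ge0$) and is controlled the same way. The hard part, and the main obstacle, is the coarse range when $p<m$, where $2^jb\,\|g_j\|_{L^p}$ exceeds the Besov term; there I would also invoke the $L^p$-boundedness of the Littlewood--Paley projection, $\|g_j\|_{L^p}\le C\|f\|_{L^p}$, alongside $\|g_j\|_{L^p}\le 2^{-jm/p}\|f\|_{\dot B^{m/p}_{p,1}}$. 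Splitting $\sum_{j<j_0}2^jb\|g_j\|_{L^p}$ at the index $2^{j_*}\approx N^{p/m}<b^{-1}$ and using the first bound below $j_*$ and the second above, each piece is controlled by $bN^{p/m}\|f\|_{L^p}=(b^{m/p}N)^{p/m}\|f\|_{L^p}<\delta^{p/m}\|f\|_{L^p}$. Hence $\mathrm{Err}\lesssim(\delta+\delta^{p/m})\|f\|_{L^p}\le\eps\|f\|_{L^p}$ for $\delta$ small, which together with the reduction of the first paragraph yields the claimed two–sided inequality. The bookkeeping of the overlap and trace constants from (i)--(iv), all absorbed into $C_0,D,d,m,p$, is routine but is exactly where the geometric hypotheses enter.
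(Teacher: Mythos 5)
Your proposal is correct, and its skeleton coincides with the paper's: reduce, via \eqref{eq:equiv}, \eqref{eq:mes2} and Minkowski's inequality, to the oscillation term $\bigl(\int_G\int_{H_a}|f(x)-f(a)|^p\,\mathrm{d}\nu_a\,\mathrm{d}\H^{d-m}(a)\bigr)^{1/p}$; expand $f$ in wavelets, split at $2^{-j_0}\in(b,2b]$; treat coarse scales $j<j_0$ by the Lipschitz bound on $\psi_{j,\lambda}$ and the count $\H^{d-m}\{a: k\in Z(a,j)\}\leq C2^{-jd}b^{-m}$ from \eqref{eq:mes}; and sum using the two bounds $2^{j(d/2-d/p)}\bigl(\sum_\lambda|c_{j,\lambda}|^p\bigr)^{1/p}\leq C\|f\|_{L^p}$ and $\sum_j 2^{jm/p}2^{j(d/2-d/p)}\bigl(\sum_\lambda|c_{j,\lambda}|^p\bigr)^{1/p}\approx\|f\|_{\dot{B}^{m/p}_{p,1}}$. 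You deviate in two steps, one of them substantive. At the fine scales $j\geq j_0$ the paper keeps the difference and invokes the sharp overlap count $M_j=\sup_k\H^{d-m}\{a\in G: k\in Z(a,j)\}\leq C2^{-j(d-m)}$, a count of those $a$ whose attached manifold $H_a$ meets a cube of side $\approx 2^{-j}\ll b$; this is exactly the bound whose absence produces your ``spurious factor'', since the crude inclusion $\{a: k\in Z(a,j)\}\subset G\cap B(2^{-j}k,Cb)$ together with \eqref{eq:mes} yields only $Cb^{d-m}=C(2^jb)^{d-m}2^{-j(d-m)}$. The sharp count does hold in the paper's setting, but proving it needs conditions (ii)--(iii) rather than \eqref{eq:mes} alone as the paper indicates: if $H_a$ meets the cube, the lower bound in \eqref{eq:mes2} gives $\nu_a\bigl(B(2^{-j}k,C2^{-j})\cap H_a\bigr)\geq c\,2^{-jm}$, and integrating over such $a$ and applying \eqref{eq:equiv} to the enlarged cube gives $M_j\leq C2^{-jd}/2^{-jm}$. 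Your substitute --- splitting $|g_j(x)-g_j(a)|^p\leq C\bigl(|g_j(x)|^p+|g_j(a)|^p\bigr)$, controlling the first term by \eqref{eq:equiv} and the second by the trace count $\H^{d-m}\bigl(G\cap B(2^{-j}k,C2^{-j})\bigr)\leq C2^{-j(d-m)}$, which only requires $a$ itself (not $H_a$) to lie near the wavelet support --- is immediate from \eqref{eq:mes} as stated and yields the same per-scale bound $Cb^{m/p}2^{jm/p}\|g_j\|_{L^p}$; so you bypass the most delicate geometric step of the paper's proof at the cost of giving up the difference structure, which is harmless there. The second deviation is minor bookkeeping: for the coarse sum the paper splits at a fixed offset $j_1=j_0-K$, with $K$ chosen so that $C2^{-K}\leq(2C_0)^{-1}$, and absorbs the term $C2^{j_1-j_0}\|f\|_{L^p}$ into the main term, whereas you split at the $N$-dependent index $2^{j_*}\approx N^{p/m}$ (which indeed satisfies $j_*<j_0$ under $b^{m/p}<\delta N^{-1}$) and make the entire error at most $C(\delta+\delta^{p/m})\|f\|_{L^p}$; the two are equivalent in strength, yours giving a cleaner ``error vanishes with $\delta$'' formulation, the paper's requiring no case distinction between $p\geq m$ and $p<m$.
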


\begin{proof}	
 The proof repeats the one-dimensional argument from the previous section. We start with a smooth one-dimensional wavelet function supported on $[-R,R]$ and construct a wavelet basis as outlined in the introduction. We have
$$
f=\sum_{j\in\Z}\sum_{\lambda\in\Z^d\times L^d}\scal{f,\psi_{j,\lambda}}\psi_{j,\lambda}.
$$ 
Fix some $a$ and  let $x\in H_a$. Then
\begin{eqnarray*}
f(x)-f(a)=\sum_{j\in\Z}2^{dj/2}\sum_{\lambda=(k,l)\in\Z^d\times L^d}\scal{f,\psi_{j,\lambda}}\bigl(\psi_{0,l}(2^jx-k)-\psi_{0,l}(2^ja-k)\bigr).
\end{eqnarray*}

Write
\begin{gather*}
Z_x(j)=\{k\in\Z^d: 2^jx-k\in[-R,R]^d\},\ Z_x(a,j)=Z_x(j)\cup Z_{a}(j),\\ Z(a,j)=\cup_{x\in H_a}Z_x(j),
\end{gather*}
and note that, $|Z_x(j)|\leq C$, where $C$ depends on $R$ only.
We also define
$$
E_{a,j}=\left(\sum_{\lambda\in Z(a,j)\times L^d}|\scal{f, \psi_{j,\lambda}}|^p\right)^{1/p}.
$$
Further, it is not difficult to see that
 $|Z(a,j)|\le C(1+2^{jm}b^{m})$ and by \eqref{eq:mes} the following inequality holds
$$
M_j:=\sup_{k\in\Z^d}\H^{d-m}\{a\in G\,:k\in Z(a,j)\}|\leq \begin{cases}C_0 2^{-j(d-m)}&\mbox{if }j\geq j_0\\
C_0b^{-m}2^{-dj}&\mbox{if }j< j_0
\end{cases},
$$
where $2^{-j_0}\in (b,2b]$ as before.
Then \eqref{eq:fn} becomes
$$
|f(x)- f(a)|\le C\sum_{j\ge j_0}2^{dj/2}E_{a,j}+C\sum_{j<j_0}2^{(d+2)j/2}|x-a|E_{a,j}. 
$$
We take the $L^p(H_a,\nu_a)$-norms over $H_a$. Then we apply the triangle inequality and get
\begin{multline*}
\left|\left(\int_{H_a}|f(x)|^p d\nu_a\right)^{1/p}-
|f(a)|\nu_a(H_a)^{1/p}\right|\\
\le
C\sum_{j\ge j_0}2^{dj/2}b^{m/p}E_{a,j}+C\sum_{j<j_0}2^{(d+2)j/2}b^{1+m/p}E_{a,j}.
\end{multline*}
Next, taking the $L^p$ norm over each $G$, we have
\begin{multline}\label{eq:new}
\left|\left(\int_G\int_{H_a}|f(x)|^p\,\mbox{d}\nu_a(x)\,\mbox{d}\H^{d-m}(a)\right)^{1/p}-
\|f(a)\nu_a(H_a)^{1/p}\|_{L^p(G)}\right|\\
\le
C\sum_{j\ge j_0}2^{dj/2}b^{m/p}\|E_{a,j}\|_{L^p(G)}+C\sum_{j<j_0}2^{(d+2)j/2}b^{1+m/p}\|E_{a,j}\|_{L^p(G)}.
\end{multline}
Futher, when $j\ge j_0$ the estimate for $M_j$ above implies
\[
\|E_{a,j}\|_{L^p(G)}\le C_02^{-j(d-m)/p}\left(\sum_{\lambda=(k,l)}|\scal{f, \psi_{j,\lambda}}|^p\right)^{1/p}.\]
Then the first sum is the right hand side of \eqref{eq:new} is  bounded by $Cb^{m/p}\|f\|_{\dot{B}_{p,1}^{m/p}}$. 

For $j<j_0$, we have
\[
\|E_{a,j}\|_{L^p(G)}\le C_0b^{-m/p}2^{-jd/p}\left(\sum_{\lambda=(k,l)}|\scal{f, \psi_{j,\lambda}}|^p\right)^{1/p}.\]
We divide the second sum in the right hand side of \eqref{eq:new} into two sums, where $j_1<j_0$ will be fixed later,
\begin{multline*}
C\sum_{j<j_0}2^{(d+2)j/2}b^{1+m/p}\|E_{a,j}\|_{L^p(G)}
\le C\left(\sum_{j<j_1}+\sum_{j=j_1}^{j_0}\right)b2^{j(d/2+1-d/p)}\left(\sum_{\lambda=(k,l)}|\scal{f, \psi_{j,\lambda}}|^p\right)^{1/p}\\
\le Cb\sum_{j<j_1}2^j\|f\|_{L^p(\R^d)}+Cb\max_{j_1\le j<j_0}\{2^{j(1-m/p)}\}\|f\|_{\dot{B}^{m/p}_{p,1}}\\
\le 
Cb2^{j_1}\|f\|_{L^p(\R^d)}+Cb\left(2^{j_0(1-m/p)}+2^{j_1(1-m/p)}\right)\|f\|_{\dot{B}^{m/p}_{p,1}},
\end{multline*}
where the constants depend on $\psi$ 
and does not depend on $j$ (it follows by a simple scaling argument). Since $b2^{j_0}\in[1/2,1]$, we obtain
\begin{multline*}
\sum_{j<j_0}2^{(d+2)j/2}b^{1+m/p}\|E_{a,j}\|_{L^p(G)}\le\\
C2^{j_1-j_0}\|f\|_{L^p(\R^d)}+Cb^{m/p}2^{m/p}\left(1+2^{(j_1-j_0)(1-m/p)}\right)\|f\|_{\dot{B}^{m/p}_{p,1}}.
\end{multline*}
In summary, \eqref{eq:new} implies
\begin{multline}
\label{eq:last}
\left|\left(\int_G\int_{H_a}|f(x)|^p\,\mbox{d}\nu_a(x)\,\mbox{d}\mu(a)\right)^{1/p}-
\|f(a)\nu_a(H_a)^{1/p}\|_{L^p(G)}\right|\\
\le C 2^{j_1-j_0}\|f\|_{L^p(\R^d)}+Cb^{m/p}C_{m,p}
\|f\|_{\dot{B}_{p,1}^{m/p}},
\end{multline}
where $C_{m,p}=C\Bigl(1+2^{m/p}\bigl(1+2^{(j_1-j_0)(1-m/p)}\bigr)\Bigr)$.
Taking the lower bound in \eqref{eq:equiv} and the upper bound in \eqref{eq:mes2} we get
$$
C_0^{-1}\norm{f}_{L^p(\R^d)}-C_0^{1/p}b^{m/p}\|f\|_{L^p(G)}\le C 2^{j_1-j_0}\|f\|_{L^p}+Cb^{m/p}C_{m,p}
\|f\|_{\dot{B}_{p,1}^{m/p}}.
$$
Choosing $j_1$ small enough to have $C 2^{j_1-j_0}\leq (2C_0)^{-1}$  we get
$$
(2C_0)^{-1}\norm{f}_{L^p(\R^d)}-C_{m,p}b^{m/p}\|f\|_{\dot{B}_{p,1}^{m/p}}\leq C_0^{1/p}b^{m/p}\|f\|_{L^p(G)}.
$$

On the other hand, taking the upper bound in \eqref{eq:equiv} and the lower bound in \eqref{eq:mes2}, \eqref{eq:last} with $j_1=j_0$
implies
$$
C_0^{-1/p}b^{m/p}\|f\|_{L^p(G)}-C_0\norm{f}_{L^p(\R^d)}\le C \|f\|_p+C(1+2^{m/p+1})b^{m/p}\|f\|_{\dot{B}_{p,1}^{m/p}}.
$$

The theorem follows immediately.
\end{proof}

\begin{remark}
The result may be extended to sampling sets that do not exactly satisfy the requirements.
For instance let $\{r_n\}_{n\in\N}$ be an increasing sequence of positive numbers, $r_{n+1}-r_n\in(b/2,b)$ and let 
$G=\cup_n\{x:|x|=r_n\}$ be the union of concentric spheres of radius $r_n$. For $a\in G$, $|a|=r_n$,
we define 
$$
H_a=\begin{cases}
\left\{ta,\  
\frac{r_{n-1}+r_n}{2}\leq t\leq\frac{r_{n+1}+r_n}{2}\right\}&\mbox{for }n\geq 1\\
\left\{ta,\  
0\leq t\leq\frac{r_0+r_1}{2}\right\}&\mbox{for }n=0
\end{cases}.
$$
The right hand side of \eqref{eq:equiv} does not hold and we have to replace it by
\begin{equation}\label{eq:equiv2}
\int_G\int_{H_a}|f(x)|\,\mbox{d}\nu_a(x)\,\mbox{d}\H^{d-m}(a)\leq
C_0\int_0^\infty\int_{\S^{d-1}}|f(r\zeta)|\,\mbox{d}\sigma(\zeta) \max(1,r^{d-1})\,\mbox{d}r
\end{equation}
while the left hand side of \eqref{eq:equiv} still holds.

Then the proof of the theorem shows that
$$
C_1\|f\|_{L^p(\R^d)}\leq
b^{m/p}\left(\int_G|f(x)|^pd\H^{d-m}(x)\right)^{1/p}\le C_2\|f\|_{L^p(\R^d,\nu)}
$$
where $\nu$ is the measure $\,\mbox{d}\sigma(\zeta)\max(1,r^{d-1})\,\mbox{d}r$ (in polar coordinates).
We then apply the left hand side of this inequality to $f\alpha$ where $\alpha$ is a smooth function such that $\alpha=0$ in a ball $B(0,\eps)$
and $\alpha=1$ outside the ball $B(0,1/4)$. Then $\norm{\alpha f}_{B_{p,1}^{m/p}}\leq C\norm{f}_{B_{p,1}^{m/p}}$
so that the theorem applies if $b$ is small enough. It remains to notice that
\begin{eqnarray*}
\left(\int_G|f(x)|^pd\H^{d-m}(x)\right)^{1/p}&=&
\left(\int_G|\alpha(x)f(x)|^pd\H^{d-m}(x)\right)^{1/p}\\
&\le& C_2\|\psi f\|_{L^p(\R^d,\nu)}\leq C\| \alpha f\|_{L^p(\R^d)}\leq C\|f\|_{L^p(\R^d)}.
\end{eqnarray*}
Therefore the theorem also holds in this case.

A similar reasoning also applies to a spiral. Let $r_k$ be as previously and let $\rho$ be a smooth (strictly)
increasing function such that $\rho(2k\pi)=r_k$. Let $G\subset\R^2$ be the curve given in polar coordinates by $\rho$,
that is $G=\{\rho(\theta)(\cos\theta,\sin\theta),\theta\in[0,+\infty)\}$. If $\theta\in[2k\pi,2(k+1)\pi]$
we attach to $a=\rho(\theta)(\cos\theta,\sin\theta)$ the manifold $H_a=t(\cos\theta,\sin\theta)$,
$r_{k-1}\leq t\leq r_{k+1}$ (with the convention $r_{-1}=0$).
\end{remark}

\subsection{Irregular sampling}
Let us now outline how the above results fit into 
existing general sampling procedures.
Suppose that $f\in B^{m/p}_{p,1}(\R^d)$ and $G$ is the union of $d-m$-dimensional subspaces as in the theorem, then we have the trace operator bounded in the following way
\[
T_G:B^{m/p}_{p,1}(\R^d)\to L^p(G),\quad \|T_Gf\|_{L^p(G)}\le C\left(b^{-m/p}\|f\|_{L^p(\R^d)}+\|f\|_{B^{m/p}_{p,1}(\R^d)}\right).
\]
Assume further that we are given a bounded operator $S_G: L^p(G)\to L^p(\R^d)$ that interpolates band-limited functions. More precisely,
we assume that there is an $A>0$ such that
\[
\|S_Gu\|_{L^p(\R^d)}\le Ab^{m/p}\|u\|_{L^p(G)}
\] 
and if $g$ is bandlimited with $\hat{g}(x)=0$ when $x\not\in[-cb^{-1},cb^{-1}]^d$ then
\[
\|g-S_G(T_Gg)\|_{L^p(\R^d)}\le B b^{m/p}\|g\|_{\dot{B}^{m/p}_{p,1}}.
\]    

Now, let $f\in B^{m/p}_{p,1}$. Fix a smooth bounded multiplier $\chi$,
such that $\supp(\chi)\subset[-cb,cb]^d$ and $\chi=1$ on $[-ab^{-1},ab^{-1}]^d$. 
Write $f=g+h$, where $g=P_\chi f=\mathcal{F}^{-1}(\hat{f}\chi)$ so that $\supp(g)\subset[-cb,cb]^d$ and $\hat{h}=0$ on $[-ab^{-1},ab^{-1}]^d$.
Then 
\begin{eqnarray*}
\|f-S_G(T_Gf)\|_{L^p(\R^d)}&\le&\|h\|_{L^p}+\|g-S_G(T_Gg)\|_{L^p(\R^d)}+\|S_G(T_Gh)\|_{L^p(\R^d)}\\
&\leq&\|g-S_G(T_Gg)\|_{L^p(\R^d)}+\|h\|_{L^p(\R^d)}+Ab^{m/p}\|h\|_{L^p(G)}.
\end{eqnarray*} 
Applying the theorem we see that the last two terms are bounded by $C(1+A)b^{m/p}\|f\|_{B^{m/p}_{p,1}}$.
If we further assume better smoothness for $f$, $f\in B^{s}_{p,\infty}$ with $s>1/p$, then they are even bounded 
by $C_s(1+A)b^{s}\|f\|_{B^{s}_{p,\infty}}$. 
Thus sampling of functions in Besov spaces can be reduced to sampling of bandlimited functions. 
This was done in \cite{JOU} for the case of  dimension one and regular samples.
We allow irregular sample sets and to claim the correct order of converges of reconstructions, 
\[
\|f-S_G(T_Gf)\|_p\le Cb^{m/p}\|f\|_{B^{m/p}_{p,1}},
\]
we need the constants $A$ and $B$ be uniform, i.e., they may depend only on $p,m,d$ and $D$ but not an the specific geometry 
of  the set $G$.

 It can be checked that for example the (iterative) sampling algorithm provided in \cite{A} can be applied, 
where the $L^2$-estimates can be replaced by $L^p$-estimates, related inequalities can be found in \cite{FG, Pes1}. 
Let $G=\Gamma\times\R^m$ as above, and let $\Lambda=(b\Z)^{m}$. We consider $\Lambda_G=\Gamma\times\Lambda$, it is a discrete separated set in $\R^d$. The usual averaging operator $V:L^p(G)\to l^p(\Lambda_G)$ is bounded, $\|V(u)\|_{l^p(\Lambda_G)}\le b^{(m-d)/p} \|u\|_{L^p(G)}$. Let $\{\beta_j\}$ be a smooth  bounded partition of unity adapted to $\{B(x,2b)\}_{x\in\Lambda_g}$ (see for example Definition 4.2 in \cite{A}). We define $A:l^p(\Lambda_G)\to L^p$ by $Ac=\sum_j c_j\beta_j$ and $A_1=AVT_G$. Then by Lemma 4.1 in \cite{FG1} choosing $c$ small enough we can guarantee that $I-P_\chi A_1$ is a contraction on $L^p\cap \mathcal{F}^{-1}(L^2([-cb,cb]^d)$, where $P_\chi:L^p\to L^p$ is defined by $P_\chi f=(\mathcal{F})^{-1}(\chi \hat{f})$ as above. Then there exists $N$ depending only on $p,m,d, c$ such that 
\[S_G=\sum_{k=0}^N(I-PA_1)^k P_\chi AV_G\] satisfies the required uniform estimates.


\begin{thebibliography}{11}
\bibitem{A} 
\textsc{A. Aldroubi},
\newblock{\em Non-uniform weighted average sampling and reconstruction in shift-invariant and wavelet spaces},
Appl. Comput. Harmon. Anal. {\bf 13} (2002), 151--161.

\bibitem{AG}
 \textsc{A. Aldroubi \& K. Gr\"ochenig}
\newblock{\em Nonuniform sampling and reconstruction in shift-invariant spaces},
 SIAM Rev. {\bf  43} (2001),  585--620

\bibitem{Bo}
\textsc{G. Bourdaud},
\newblock{\em Ondelettes et espaces de Sobolev},
Rev. Mat. Iberoam. {\bf 11} (1995), 477--512.

\bibitem{CP}
\textsc{M.~G. Cowling \& J.~F. Price},
\newblock{\em Bandwidth versus time concentration: the Heisenberg-Pauli-Weyl inequality},
SIAM J. Math. Anal. {\bf 15} (1984), 151--165.

\bibitem{Daub}
\textsc{I. Daubechies},
\newblock{\em Ten Lectures on Wavelets},
CBMS-NSF regional conference series in applied mathematics {\bf 61}. SIAM, 1992

\bibitem{FG1}
\textsc{H. Feichtinger \& K. Gr\"ochenig},
\newblock{\em Iterative reconstruction of multivariate band-limited functions from irregular sampling values}, 
SIAM J. Math. Anal. {\bf 23} (1992), no. 1, 244--261.


\bibitem{FG}
\textsc{H. Feichtinger \& K. Gr\"ochenig},
\newblock{\em Irregular sampling theorems and series expansion of band-limited functions},
J. Math. Anal. Appl., {\bf 167} (1992), 530--556.

\bibitem{Gr1}
\textsc{K. Gr\"ochenig}
\newblock{\em Reconstruction algorithms in irregular sampling.}
Math. Comp. {\bf 59} (1992), 181--194. 

\bibitem{Gr2}
\textsc{K. Gr\"ochenig}
\newblock{\em Irregular sampling of wavelet and short-time Fourier transforms.}
Constr. Approx. {\bf 9} (1993),  283--297.

\bibitem{GRUV}
\textsc{K. Gr\"ochenig, J. L. Romero, J. Unnikrishnan \& M. Vetterli},
\newblock{\em On minimal trajectories for mobile sampling of bandlimited fields},
arXiv:1312.7794

\bibitem{JM}
\textsc{St. Jaffard \& Y. Meyer}
\newblock{\em On the regularity of functions in critical Besov spaces},
J. Funct. Anal., {\bf 175} (2000), 415--434.

\bibitem{JOU} 
\textsc{St. Jaffard, M. Okada \& T. Ueno}
\newblock{\em Approximate sampling theorem and the order of smoothness of the Besov space,}
 Harmonic analysis and nonlinear partial differential equations, 45--56, RIMS K\^oky\^uroku Bessatsu, B18, Res. Inst. Math. Sci. (RIMS), Kyoto, 2010. 

\bibitem{Ky}
\textsc{G. Kyriazis}
\newblock{\em Wavelet coefficients measuring smoothness in $H^p(\R^d)$},
Appl. Comp. Har. Anal. {\bf 3} (1996), 100--119.


\bibitem{LM}
\textsc{Yu. Lyubarskii \& W. R. Madych},
\newblock{\em The recovery of irregular sampled bond limited functions via tempered splines},
J. Funct. Anal., {\bf 125} (1995), 201--222.
 
 \bibitem{Mad}
 \textsc{W. R. Madych},
 \newblock{\em An estimate for multivariate interpolation, II},
 J. Approx. Theory, {\bf 142} (2006), 116--128.

 \bibitem{MM}
 \textsc{J. Martin \& M. Milman},
 \newblock{\em Isoperimetric weights and generalized uncertainty inequalities in metric measure spaces.}
arXiv:1501.06556 [math.FA]


\bibitem{mey1}
\textsc{Y. Meyer},
\newblock{\em Ondelettes et op\'erateurs},
Hermann, Paris 1990

\bibitem{Pet}
\textsc{J. Peetre}, 
\newblock{\em New thoughts on Besov spaces},
Duke Univ. Math. Series, Durham, 1976.

\bibitem{Pes}
\textsc{I. Pesenson}, 
\newblock{\em A reconstruction formula for band limited functions in $L^2(\R^d)$},
Proc. AMS, {\bf 127} (1999), 3593--3600.

\bibitem{Pes1}
\textsc{I. Pesenson}, 
\newblock{\em Plancherel-Polya-type inequalities for entire functions of exponential type in $L_p(\R^d)$},
J. Math. Anal. Appl, {\bf 330} (2007), 1194--1206.

\bibitem{str}
\textsc{R.\,S. Strichartz},
\newblock{\em Uncertainty principles in harmonic analysis},
J. Funct. Anal., {\bf 84} (1989), 97--114.

\bibitem{Tri}
\textsc{H. Triebel},
\newblock{\em Characterizations of Besov-Hardy-Sobolev spaces via harmonic functions, temperatures, and related means},
 J. Approx. Theory {\bf 35} (1982), 275--297

\bibitem{Tbook}
\textsc{H. Triebel},
\newblock{\em Theory of function spaces III},
Monographs in Math., vol. 100, Birkha\"user Verlag, 2006.  

\end{thebibliography}
\end{document}